\newtheorem{theorem}{Theorem}[section]
\newtheorem{corollary}[theorem]{Corollary}
\newtheorem{lemma}[theorem]{Lemma}
\newtheorem{proposition}[theorem]{Proposition}
\newtheorem{definition}[theorem]{Definition}
\newtheorem{remark}[theorem]{Remark}
	\newtheorem{question}[theorem]{Question}
 \newtheorem{observation}[theorem]{\bf Observation}
\begin{document}

\title{Forbidden subgraphs on conjugacy class graphs of groups}
 \author{ Papi Ray$^{1}$, Sonakshee Arora$^{2}$, Xuanlong Ma$^{3} $  \\
        \small $^{1}$Indian Institute of Technology, Kanpur-208016, India,\\
       \small  popiroy93@gmail.com.\\
       \small $^{2}$Indian Institute of Technology, Jammu-181221, India,\\
        \small tnarora123@gmail.com.\\
         \small $^{3}$
School of Science, Xi'an Petroleum University, Xi'an 710065, P. R. China,\\
        \small xuanlma@xsyu.edu.cn\\
        \small Corresponding Author: Xuanlong Ma\\
        }
\date{}
\maketitle
\

\begin{abstract}
Let $G$ be a finite group. The commuting (resp. nilpotent) conjugacy class graph $\Gamma_{CCC}(G)$ (resp. $\Gamma_{NCC}(G)$) of $G$ is a simple graph whose vertex set consists of all non-central conjugacy classes of $G$, in which two distinct vertices $x^G$ and $y^G$ are adjacent if and only if there exist $a \in x^G$ and $b \in y^G$ such that $\langle a, b \rangle$ is an abelian (resp. nilpotent)  subgroup.
In this paper, we mainly investigate cographs,
chordal graphs, split graphs, threshold graphs, and claw-free graphs in terms of forbidden induced subgraphs in $\Gamma_{CCC}(G)$ and $\Gamma_{NCC}(G)$.
To be specific,
we characterize the induced subgraphs in the commuting conjugacy class graph for symmetric groups, alternating groups, and sporadic groups. We also provide a complete classification of these properties for EPPO-groups, nilpotent groups, dihedral groups, dicyclic groups, and generalized dihedral groups in both commuting and nilpotent conjugacy class groups.

\vskip2mm
\noindent \textbf{\textit{keywords}:} {Commuting conjugacy class graph, nilpotent conjugacy class graph, induced
subgraph, finite group.}

\medskip

\noindent\textbf{MSC Classification:} 05C25.
\end{abstract}

\thispagestyle{empty} \vspace{.4cm}
\section{Introduction}
Graph structures associated with finite groups have been widely studied, utilizing various properties to explore the connections between group theory and graph theory. One notable approach involves defining graphs on certain groups $G$ by considering the non-central conjugacy classes of $G$ as the vertex set, with adjacency determined by different properties of these classes. A detailed survey on conjugacy class graphs can be found in \cite{MR4756899}.

Let $G$ be a finite group. The \textit{commuting conjugacy class graph} of $G$, denoted by $\Gamma_{CCC}(G)$, is a simple graph where the vertices correspond to the non-central conjugacy classes of $G$, and two distinct vertices $x^G$ and $y^G$ are adjacent if and only if there exist $a \in x^G$ and $b \in y^G$ such that $\langle a, b \rangle$ forms an abelian subgroup of $G$. Herzog et al. \cite{MR2561849} first introduced this graph and analyzed its connectivity and established upper bounds on the diameters of its connected components.
Further results on commuting conjugacy class graphs can be found in \cite{MR3567881}, \cite{MR4698474}, and \cite{MR4532813}.

Following this line of research,
Mohammadian and Erfanian \cite{MR3767278} introduced the \textit{nilpotent conjugacy class graph} $\Gamma_{NCC}(G)$, which has the same vertex set as $\Gamma_{CCC}(G)$ and
two distinct vertices $x^G$ and $y^G$ are adjacent if and only if there exist $a \in x^G$ and $b \in y^G$ such that $\langle a, b \rangle$ forms a nilpotent subgroup of $G$. Their results parallel those of \cite{MR2561849}, which provided a characterization of the connected components and determined the diameters of these components.


Every graph considered in our paper is a simple graph, which is an undirected graph without loops and multiple edges.
Let $\Gamma = (V, E)$ be a graph, where
$V$ and $E$ are
the vertex set and edge set of $\Gamma$, respectively.
Given a subset $F$ of $V$, the {\it induced subgraph} $[F]$ of $\Gamma$ by $F$ is the graph whose vertex set
is $F$ and whose edge set consists of all of the edges in $E$ that have both endpoints in $F$. For any graph $H$, the graph $\Gamma$ is
said to be {\it $H$-free} if it does not contain any induced subgraph isomorphic to $H$. We also say that $H$ is a forbidden induced subgraph of $\Gamma$ if $\Gamma$ is $H$-free.

Forbidden subgraphs characterizations are used to define several important classes of graphs. In the following, we
recall some definitions which are the basis of our investigation.

\begin{definition}
 \begin{enumerate}
     \item[{\rm (1)}] A graph is said to be a \textit{cograph} if it can be constructed from isolated vertices by disjoint union and complementation (see \cite{MR619603}). On the other hand one can see that the \textit{cograph} is the one which is $P_4$-free.
     \item[{\rm (2)}] The \textit{chordal graph} is the one which contains no induced cycles of length greater than three, that is $C_n$-free for any $n\ge 4$ (see \cite{MR232694}).
     \item[{\rm (3)}] A graph is \textit{split} if and only if it contains no induced subgraph isomorphic to
         $C_4, C_5$, or $2K_2$ (see \cite{MR505860}).
     \item[{\rm (4)}] A graph is \textit{threshold graph} if it can be constructed from one vertex graph by repeatedly adding an isolated vertex or dominant vertex (see \cite{MR1417258}). Equivalently, a graph is a threshold graph if and only if it contains no induced subgraph isomorphic to $2K_2$, $C_4$, or $P_4$.

     \item[{\rm (5)}] A graph is called a claw graph if it is isomorphic to the complete bipartite graph $\mathrm{K}_{1,3}$. Equivalently, a graph is claw-free if it contains no vertex with three pairwise non-adjacent neighbours (see \cite{MR4835924}).
 \end{enumerate}
\end{definition}
Every group considered in this paper is finite.
For a group $G$, the {\it commuting graph} of $G$ has vertex set $G$ and two distinct vertices $x$ and $y$ are connected if and only if $xy=yx$ in $G$. Several authors studied various properties of commuting graph.
Morgan and Parker \cite{MR3090056} proved that the connected components of commuting graph when the center is trivial have diameter at most $10$. Furthermore, Kumar et al.  \cite{MR4296337} determined the edge connectivity and the minimum degree of commuting graph and proved that both are equal. They also gave the matching number, clique number and boundary vertex of commuting graph.
Recently, Arvind et al. \cite{MR5080938} discussed the computational problem of deciding whether a given graph is the commuting graph of a finite group and gave new results on the question of whether the commuting graph of a given group is a cograph or a chordal graph.
For more properties of commuting graph, we refer to \cite{MR4730001},\cite{MR4554555},\cite{MR4115328}, and \cite{MR3092687}, and the references therein.

For a finite group $G$, write
$$\mathrm{nil(G)}= \{y \in G \mid \langle x, y \rangle \mbox{ is nilpotent for any } x \in G\},$$
where $\mathrm{nil(G)}$
is called the {\it nilradical} of $G$.
The {\it nilpotent graph} of $G$ has vertex set $G\setminus \mathrm{nil}(G)$, and two distinct vertices $x$ and $y$ are connected if and only if $\langle x,y\rangle$ is a nilpotent subgroup of $G$. Das and Nongsiang \cite{MR3395704} showed that the collection of finite non-nilpotent groups whose nilpotent graphs have the same genus is finite, and derived explicit formulas for the genus of the nilpotent graphs of some well-known classes of finite non-nilpotent groups. Further, they determined all finite non-nilpotent groups whose nilpotent graphs are planar or toroidal.


Recently, the study of forbidden subgraphs in power graphs has received considerable attention. Manna et al. \cite{MR4281681} characterized all finite nilpotent groups whose power graph is a cograph or a chordal graph. Furthermore, they proved that power graphs are always perfect and classified finite groups whose power graph is a threshold or a split graph.
Later, in \cite{MR4335775}, they determined when the power graph of the direct product of two groups is a cograph.
For simple groups, they showed that in most cases their power graphs are not cographs. Doostabadi et al. \cite{MR3188846}  studied forbidden subgraphs in power graphs by characterizing the finite groups whose power graph is claw-free, $K_{1,4}$-free, or $C_4$-free.
In addition, Ma et al. \cite{MR4746158} classified all finite groups whose enhanced power graph is split and threshold.
Li et al. \cite{MR4863726} classified the finite groups whose trivial intersection power graph is claw-free, $K_{1,4}$-free, $C_4$-free, or $P_4$-free. They also studied finite groups whose trivial intersection power graph is threshold, chordal, or split. Lucchini and Daniele \cite{MR4511156} classified finite groups whose generating graph is a cograph and established that for a finite group $G$, the properties of being split, chordal, and $C_4$-free are equivalent.
Li et al. \cite{MR4236739} provided a complete characterization of finite groups whose reduced power graph is chordal, split, or threshold.

A finite group $G$ is called an {\it EPO-group} if the order of every non-identity element of $G$ has prime order. Similarly, a finite group $G$ is called an {\it EPPO-group} if the order of every non-identity element of $G$ has prime power order. 

In this paper, we discuss the forbidden subgraphs on the commuting and nilpotent conjugacy class graph for various groups. More specifically, in Sect. \ref{sec.4}, we show that both $\Gamma_{CCC}(G)$ and $\Gamma_{NCC}(G)$ are cographs as well as chordal graphs if $G$ is an EPPO-group or a nilpotent group.
We also find necessary and sufficient conditions when the commuting and nilpotent conjugacy class graphs of EPPO-groups  and nilpotent groups are $2K_2$-free and claw-free.
In Sect. \ref{sec.2}, we determine the conditions on $n$ for which the commuting conjugacy class graphs of $\mathrm{Sym}(n)$, and $\mathrm{Alt}(n)$ groups do not contain $P_4$, $2K_2$, and claw as an induced subgraphs.
Sect. \ref{sec.3} extends this analysis for all sporadic groups. 
In Sect. \ref{sec.5}, we obtain the results for dihedral, dicyclic, and generalized dihedral groups in both commuting and nilpotent conjugacy class groups.
In Sect.~\ref{sec.6}, we give some open problems related to the forbidden subgraphs on the conjugacy class graphs of groups.



\section{Nilpotent groups and EPPO-groups}\label{sec.4}
 In this section, we will study a few forbidden subgraphs of the commuting conjugacy class graphs of  nilpotent groups and EPPO-groups.
 We start this section by mentioning a proposition that will be required in subsections \ref{sub.nil} and \ref{sub.eppo}.
\begin{proposition}\label{pro.imp}
Let $ G $ be a nilpotent group, and let $ P_1 $ be a non-abelian Sylow $ p $-subgroup for some prime $ p $. Then there exist $ p+1 $ elements in $ P_1 $ that are pairwise non-commuting and not conjugate to each other in $P_1$.
\end{proposition}
\begin{proof}
 Let $a_1\in P_1\setminus Z(P_1)$, where $Z(P_1)$ is the center of $ P_1 $. Let $ C_{P_1}(a_1) = \{ a \in P_1 \mid aa_1 = a_1a \} $ be the centralizer of $ a_1 $ in $ P_1 $. Consider the subgroup $ H := \langle a_1, Z(P_1) \rangle $. Since $ Z(P_1) \subseteq C_{P_1}(a_1) $, it follows that $ H \subseteq C_{P_1}(a_1) $.
Since $ P_1 $ is a $ p $-group, there exists a maximal subgroup $ M_1 $ of $ P_1 $ such that $ C_{P_1}(a_1) \subseteq M_1 $. Choose an element $ a_2 \notin M_1 $, so that $ a_2 \notin C_{P_1}(a_1) $, which implies that $ a_2a_1 \neq a_1a_2 $. Furthermore, since $ a_2 \notin M_1 $, it follows that $ a_2 $ is not conjugate to $ a_1 $.

Next, consider $ C_{P_1}(a_2) $ and a maximal subgroup $ M_2 $ of $ P_1 $ containing $ C_{P_1}(a_2) $. Choose $ a_3 \in P_1 \setminus (M_1 \cup M_2) $, so that $ a_3 $ does not commute with either $ a_1 $ or $ a_2 $, and is also not conjugate to them. By continuing this process iteratively, we can construct the required set of elements.
Moreover,
since $ M_1 $ and $ M_2 $ are normal subgroups of $ P_1 $, their intersection $ M_1 \cap M_2 $ is normal in $ P_1 $. Consider the quotient group $ P_1 / (M_1 \cap M_2) $. This quotient is either isomorphic to $ \mathbb{Z}_{p^2} $ or $ \mathbb{Z}_p \times \mathbb{Z}_p $. Since there exist exactly two subgroups of order $ p $ in $ P_1 / (M_1 \cap M_2) $, it must be isomorphic to $ \mathbb{Z}_p \times \mathbb{Z}_p $.
Now, in $ \mathbb{Z}_p \times \mathbb{Z}_p $, there are precisely $ p+1 $ elements of order $ p $. Their corresponding subgroups in $ P_1 $ will be maximal subgroups containing the centralizers of elements $ a_i \in P_1 $ for $ 1 \leq i \leq p+1 $. Thus, we obtain $ p+1 $ such elements $ a_i $.
\end{proof}

We are now going to state the preliminary lemma, which we use constantly in this subsection.

\begin{lemma}\cite[Lemma 3]{MR2561849}\label{CCC}
 Let $G$ be a locally finite group and $p$ be a prime number. If $x$ and $y$ are $p$-elements then $d(x,y)\le 2$ in $\Gamma_{CCC}(G)$.
\end{lemma}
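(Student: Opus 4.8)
The strategy is to collect the two $p$-elements into a single finite $p$-subgroup and then connect them through a nontrivial central element of that subgroup, which by construction commutes with both of them. First I would pass to a finite group: since $G$ is locally finite, $K := \langle x, y\rangle$ is finite, and inside $K$ both $x$ and $y$ are still $p$-elements. Each therefore lies in some Sylow $p$-subgroup of $K$, and since all Sylow $p$-subgroups of the finite group $K$ are conjugate, there is $k \in K$ with $x$ and $y^{k}$ lying in one common Sylow $p$-subgroup $P$ of $K$. Conjugation by $k$ fixes the conjugacy class $y^{G}$ and hence does not affect the quantity $d(x^{G}, y^{G})$ we want to bound, so I may assume from the outset that $x, y \in P$.

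Now set $H := \langle x, y\rangle \le P$. Being finitely generated and contained in the $p$-group $P$, it is itself a finite $p$-group. If $x$ and $y$ commute, then $\langle x, y\rangle$ is abelian and $x^{G}$ is already adjacent to $y^{G}$, so $d(x^{G}, y^{G}) \le 1$. Otherwise $H$ is nonabelian; since every nontrivial finite $p$-group has a nontrivial centre, I pick $1 \ne z \in Z(H)$. Then $z$ is a $p$-element commuting with both $x$ and $y$, so $\langle x, z\rangle$ and $\langle z, y\rangle$ are abelian subgroups of $G$. Reading off the adjacencies in $\Gamma_{CCC}(G)$, the vertex $x^{G}$ is adjacent to $z^{G}$ and $z^{G}$ is adjacent to $y^{G}$, so the path $x^{G} - z^{G} - y^{G}$ yields $d(x^{G}, y^{G}) \le 2$.

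The delicate point — and what I expect to be the main obstacle — is guaranteeing that the intermediate class $z^{G}$ is an honest vertex of $\Gamma_{CCC}(G)$, i.e. that the central element $z$ can be chosen outside $Z(G)$; if $Z(H)$ happened to lie inside $Z(G)$, the constructed path would collapse (the borderline situations being $p$-groups all of whose central $p$-elements are already central in $G$). The way to handle this is to take $z \in Z(H)\setminus Z(G)$, which is available exactly when $Z(H) \not\le Z(G)$, and to verify that this is the case under the running hypotheses. Apart from this centrality bookkeeping, every step is routine: the reduction to $H$ uses only local finiteness together with Sylow conjugacy in the finite group $K$, and both adjacencies are immediate from the definition of the commuting conjugacy class graph.
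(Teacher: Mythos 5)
The paper itself gives no proof of this lemma (it is quoted from the Herzog--Longobardi--Maj paper), so there is nothing to compare line by line; your reduction --- pass to the finite group $K=\langle x,y\rangle$, conjugate $y$ into a Sylow $p$-subgroup containing $x$, and route the path through a nontrivial element of the centre of the resulting finite $p$-group $H$ --- is exactly the standard argument one expects behind the citation, and every step up to the last one is correct. The gap is precisely the one you flag at the end, and it cannot be closed: the claim that $z$ can always be chosen in $Z(H)\setminus Z(G)$ is false. Take $G=Q_8$, $x=i$, $y=j$, so $K=P=H=Q_8$ and $Z(H)=Z(G)=\{\pm 1\}$. Every element of $Q_8$ commuting with a conjugate of $i$ and with a (possibly different) conjugate of $j$ lies in $\{\pm 1\}=Z(G)$, so with the vertex set used in this paper (non-central classes only) the graph $\Gamma_{CCC}(Q_8)$ is $3K_1$ --- consistent with the paper's own formula $\Gamma_{CCC}(T_{4n})=K_{n-1}\,\dot\cup\, 2K_1$ at $n=2$ --- and $d(i^G,j^G)=\infty$. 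So no choice of $z$ rescues the argument, and indeed the lemma as transcribed here is false under this paper's definition of the vertex set.

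The resolution is a mismatch of conventions rather than a flaw in your strategy: in the cited source the vertices of the commuting conjugacy class graph are all \emph{non-identity} conjugacy classes (compare the phrasing ``$x,y\in G\setminus\{1\}$'' in Lemma \ref{lm_1}), so the central class $\{-1\}$ of $Q_8$ is a legitimate intermediate vertex and any $1\neq z\in Z(H)$ completes your proof with no further bookkeeping. You should therefore either prove the statement under that convention --- in which case your argument is already complete once you drop the requirement $z\notin Z(G)$ --- or observe explicitly that transplanting the lemma to the non-central-vertex setting requires an additional hypothesis (something forcing $Z(H)\not\le Z(G)$), and that without it the bound $d(x,y)\le 2$ can fail. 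Deferring the verification with ``one checks this under the running hypotheses'' is not acceptable here because the thing to be checked is false.
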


\subsection{Nilpotent groups}\label{sub.nil}
This subsection aims to analyze the forbidden subgraphs of the conjugacy class graph associated with nilpotent groups. For further details on nilpotent groups, see Chapter 11 of \cite{MR1369573}.

\begin{theorem}
Let $ G $ be a nilpotent group which is not a $ p $-group. The graph $ \Gamma_{CCC}(G) $ is both a cograph and chordal if $ G $ is either an abelian group or a non-abelian group possessing a unique non-abelian Sylow subgroup, with all other Sylow subgroups being abelian, and the commuting conjugacy class graph of this non-abelian Sylow subgroup is both a cograph and a chordal.
\end{theorem}
\begin{proof}
Let $ G $ be a nilpotent group such that
$\Gamma_{CCC}(G)$ is a cograph as well as chordal. If $ G $ is abelian, the statement is immediate. Thus, we assume $ G $ is non-abelian and can be expressed as
$$
G \cong P_{1} \times P_{2} \times \cdots \times P_{r},
$$
where $ P_i $ denotes the Sylow $ p_i $-subgroup of $ G $ for any $ 1 \leq i \leq k $.
We next claim that at most one Sylow subgroup of $ G $ is non-abelian. Suppose, for contradiction, that both $ P_1 $ and $ P_2 $ are non-abelian.
Furthermore, by Proposition \ref{pro.imp}, we can choose $ a_1, a_2 \in P_1 $  to be both non-commuting and non-conjugate; a similar argument applies to $ b_1, b_2 \in P_2 $.
Since the elements $ a_i\ \mbox{and}\ b_j $ (for $ 1 \leq i,j\leq 2 $) have distinct orders, it follows that $ a_i $ and $ b_j $ belong to different conjugacy classes. Moreover, as $ a_1 $ and $ a_2 $ are not conjugate, it follows that $ a_2 b_1 $ and $ a_1 b_2 $ are also not conjugate. Consequently, the graph $ \Gamma_{CCC}(G) $ contains the induced path
$$
(a_2b_{1}^{G}, b_{1}^{G}, a_{1}^{G}, a_1b_{2}^{G}),
$$
which contradicts the assumption that $ \Gamma_{CCC}(G) $ is a cograph.
Additionally, $\Gamma_{CCC}(G)$ contains the induced cycle
$$
(a_{1}^{G}, b_{1}^{G}, a_{2}^{G}, b_{2}^{G}, a_{1}^{G}),
$$
contradicting the fact that $ \Gamma_{CCC}(G) $ is chordal.
Thus, we conclude that exactly one Sylow subgroup of $ G $ is non-abelian, while all others are abelian.

Let $ P_1 $ be the unique non-abelian Sylow subgroup of $ G $. Since $ \Gamma_{CCC}(G) $ is both a cograph and chordal, it follows that $ \Gamma_{CCC}(P_1) $ must also be both a cograph and chordal.
\end{proof}

\begin{theorem}
Let $G$ be a nilpotent group. Then $\Gamma_{CCC}(G)$ is $2K_{2}$-free if and only if $G$ is abelian.
\end{theorem}

\begin{proof}
Suppose first that $ \Gamma_{CCC}(G) $ is $ 2K_{2} $-free. Let $ G \cong P_{1} \times P_{2} \times \cdots \times P_{r} $, where $ P_i $ denotes the Sylow $ p_i $-subgroup of $ G $ for each $ 1 \leq i \leq k $. Suppose, for contradiction, that $ G $ is non-abelian. Then at least one Sylow subgroup $ P_{i} $ (for some $ 1 \leq i \leq r $), say $ P_{1} $, is non-abelian.
By Proposition~\ref{pro.imp}, there exist elements $ a_{1}, a_{2} \in P_{1} $ such that they are non-commuting and non-conjugate. Let $ b \in P_{2} $. Since $ b $ belongs to a different Sylow subgroup, it follows that $ a_1, a_2, $ and $ b $ are pairwise non-conjugate. Furthermore, the elements $ a_i $ and $ a_i b $ (for $ i = 1,2 $) have different orders, implying that $ a_i $ and $ a_i b $ are not conjugate either.
As a result, $ \Gamma_{CCC}(G) $ contains the pairs $ \{a_{1}^{G}, {a_{1}b}^{G}\} $ and $ \{a_{2}^{G}, {a_{2}b}^{G}\} $, forming a $ 2K_{2} $, contradicting the assumption that $ \Gamma_{CCC}(G) $ is $ 2K_{2} $-free. Therefore, all Sylow subgroups $ P_{i} $ (for $ 1\leq i \leq r $) must be abelian, implying that $ G $ itself is abelian.

Conversely, if $ G $ is abelian, then $ \Gamma_{CCC}(G) $ is a complete graph, which is trivially $ 2K_{2} $-free.
\end{proof}

From the above lemma, the following corollary is immediate.

\begin{corollary}
Let $G$ be a nilpotent. Then
$\Gamma_{CCC}(G)$ is split, if and only if $\Gamma_{CCC}(G)$ is threshold, if and only if $G$ is an abelian group.
\end{corollary}

\begin{theorem}
Let $G$ be a nilpotent group which is not a $p$-group. Then $\Gamma_{CCC}(G)$ is claw-free if and only if $G$ an abelian group.
\end{theorem}

\begin{proof}
Clearly, if $ G $ is an abelian group,
then $ \Gamma_{CCC}(G) $ is claw-free.
Conversely, suppose that $ \Gamma_{CCC}(G) $ is claw-free. We aim to prove that $ G $ must be abelian. Assume, for contradiction, that $ G $ is non-abelian. Since $ G $ is nilpotent, it has a decomposition of the form
$$
G \cong P_{1} \times P_{2} \times \cdots \times P_{r},
$$
where each $ P_i $ is a Sylow $ p_i $-subgroup of $ G $. As $ G $ is non-abelian, at least one Sylow subgroup $ P_i $ must be non-abelian; let $ P_1 $ be such a subgroup.
By Proposition \ref{pro.imp}, there exist elements $ a_1, a_2, a_3 \in P_1 $ that are pairwise non-commuting and non-conjugate. Consider an element $ b \in P_2 $. Since $ b $ belongs to a different Sylow subgroup, it commutes with each $ a_i $, but the elements $ a_i b $ (for $ i = 1,2,3 $) remain non-conjugate to each other.
As a result, $ \Gamma_{CCC}(G) $ contains an induced claw with central vertex $ b^G $ and three pendant vertices $ (a_1b)^G, (a_2b)^G, $ and $ (a_3b)^G $. This contradicts the assumption that $ \Gamma_{CCC}(G) $ is claw-free.
Therefore, $ G $ is abelian.
\end{proof}
\begin{remark}
For nilpotent group $G$, $\Gamma_{NCC}(G)$ is a complete graph. So it is always a cograph, a chordal, a split as well as a threshold and claw-free.
\end{remark}

\subsection{EPPO-groups}\label{sub.eppo}
Recall that a  recall that, a finite group $G$ is called an {\it EPO-group} if the order of every non-identity element of $G$ has prime order  and it is called an
{\it EPPO-group} if the order of every non-identity element of $G$ has prime power order.
For further details on EPPO-groups, see \cite{MR89205}. In this subsection, we primarily examine the forbidden subgraphs of an EPPO-group $ G $ in the graphs
$ \Gamma_{CCC}(G) $ and $ \Gamma_{NCC}(G) $.

We are now going to state a lemma which we use constantly in this subsection.
\begin{lemma}
\label{lm_1}\cite[Lemma 2.1]{MR3767278}
Let $G$ be a locally finite group and $p$ be a prime number.
Then the following statements hold for $\Gamma_{NCC}(G)$ and $\Gamma_{SCC}(G)$:
(i) If $x, y\in  G \setminus {1}$ are $p$-elements, then $d(x, y) \leq 1$.
(ii) If $x, y \in G \setminus {1}$ are of non-coprime orders, then $d(x, y) \leq 3$. Moreover,
$d(x, y) \leq 2$, whenever either $x$ or $y$ is of prime power order.
\end{lemma}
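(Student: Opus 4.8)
The plan is to prove both bounds directly in $\Gamma_{NCC}(G)$ and then transfer them to $\Gamma_{SCC}(G)$ for free. Since every nilpotent subgroup is solvable, an edge of $\Gamma_{NCC}(G)$ is automatically an edge of $\Gamma_{SCC}(G)$ on the same vertex set; hence $d_{SCC}(x^G,y^G)\le d_{NCC}(x^G,y^G)$, and it suffices to exhibit the claimed short paths using pairs of elements that generate nilpotent (indeed, in most steps abelian) subgroups. Throughout I will exploit the defining feature of local finiteness: the two-generated subgroup $\langle a,b\rangle$ is finite for any $a,b\in G$, so that Sylow theory is available inside it.

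For part (i), let $x,y$ be nontrivial $p$-elements. First I would pass to the finite subgroup $H=\langle x,y\rangle$. Inside $H$, choose Sylow $p$-subgroups $P\ni x$ and $Q\ni y$; by Sylow's conjugacy theorem $Q=P^{h}$ for some $h\in H$, so $y^{h^{-1}}\in P$ and $y^{h^{-1}}\in y^{G}$. Then $\langle x, y^{h^{-1}}\rangle\le P$ is a $p$-group, hence nilpotent, which shows that $x^{G}$ and $y^{G}$ are equal or adjacent, i.e.\ $d(x,y)\le 1$.

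For part (ii), write $|x|=p^{a}m$ and $|y|=p^{b}m'$, where $p$ is a common prime divisor of the two orders (which exists precisely because they are non-coprime) and $p\nmid mm'$. The $p$-parts $u=x^{m}$ and $v=y^{m'}$ are then nontrivial $p$-elements. Since powers of a single element commute, $\langle x,u\rangle=\langle x\rangle$ and $\langle y,v\rangle=\langle y\rangle$ are cyclic, so $x^{G}$ is adjacent to $u^{G}$ and $y^{G}$ is adjacent to $v^{G}$ through abelian (hence nilpotent) edges, while $u^{G}$ and $v^{G}$ are linked by part (i). Concatenating yields the path $x^{G}-u^{G}-v^{G}-y^{G}$ of length at most $3$. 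If in addition $x$ has prime-power order, then the common prime $p$ must be the unique prime dividing $|x|$, so $x$ is already a $p$-element and the first edge is unnecessary: the path $x^{G}-v^{G}-y^{G}$ has length at most $2$.

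The main points to watch are the following. First, the Sylow argument in (i) must be carried out inside the finite subgroup $\langle x,y\rangle$ rather than in $G$ itself, since the Sylow subgroups of an infinite locally finite group need not be conjugate; local finiteness is exactly what licenses the step, and this is the conceptual heart of the proof. Second, one must ensure that the intermediate objects are genuine vertices, i.e.\ that $u=x^{m}$ and $v=y^{m'}$ are non-central. If a $p$-part happens to be central, the corresponding class is not a vertex, but then that step of the path simply collapses and the endpoints become more directly linked, so the distance can only decrease and the stated bounds still hold. I expect verifying these degenerate cases, rather than the path construction itself, to be the only delicate bookkeeping.
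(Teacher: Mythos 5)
The paper itself offers no proof of this lemma --- it is imported verbatim as \cite[Lemma 2.1]{MR3767278} --- so there is no in-paper argument to compare against; what you have written is a correct reconstruction of the standard proof from the source. Both halves are sound: passing to the finite subgroup $\langle x,y\rangle$ so that Sylow conjugacy applies is exactly the role of local finiteness, and in (ii) the $p$-parts $u=x^{m}$, $v=y^{m'}$ generate cyclic (hence nilpotent) subgroups together with $x$ and $y$ respectively, giving the path $x^{G}-u^{G}-v^{G}-y^{G}$ with the middle edge supplied by (i); the transfer to $\Gamma_{SCC}(G)$ via ``nilpotent implies solvable'' is immediate. The one soft spot is your closing remark about central $p$-parts. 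In the cited source the vertex set is the set of \emph{all} nontrivial conjugacy classes, so $u^{G}$ and $v^{G}$ are always legitimate vertices and no degenerate case arises; under the present paper's convention (non-central classes only), your assertion that the path ``simply collapses and the endpoints become more directly linked'' is not automatic. If $u=x^{m}$ is central one would still need to exhibit a nilpotent subgroup of the form $\langle x, v'\rangle$, and since $x$ is then a product of a central $p$-element with a possibly badly behaved $p'$-element, this does not follow from anything you wrote; one would have to reroute the path or argue separately. This is a mismatch between the two papers' conventions rather than a defect in your proof of the lemma as it stands in its original habitat, and it is harmless for the applications here (EPPO groups), but the collapse claim as stated is unjustified.
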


\begin{theorem}\
\label{th_eppo_co}
Let $G$ be an EPPO group. Then both $\Gamma_{CCC}(G)$ and $\Gamma_{NCC}(G)$ are cographs. 
\end{theorem}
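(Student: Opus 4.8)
The plan is to isolate the single structural feature of EPPO groups that drives everything and then reduce both graphs to a disjoint union of small pieces. The key observation is that if $x$ is a non-trivial $p$-element of an EPPO group $G$, then $C_G(x)$ is a $p$-group: any $y\in C_G(x)$ of order coprime to $p$ would commute with $x$, so $xy$ would have order $\mathrm{ord}(x)\,\mathrm{ord}(y)$, which is divisible by two distinct primes and hence not a prime power, contradicting the EPPO hypothesis. Consequently a $p$-element and a $q$-element with $p\neq q$ can never commute; nor can they lie together in a nilpotent subgroup, since in a nilpotent group elements of coprime order lie in distinct Sylow factors and therefore commute, which is again impossible. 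Thus in both $\Gamma_{CCC}(G)$ and $\Gamma_{NCC}(G)$ there is no edge between a conjugacy class of $p$-elements and a conjugacy class of $q$-elements whenever $p\neq q$.

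Writing $V_p$ for the set of non-central classes consisting of $p$-elements, the vertex set partitions as $\bigsqcup_p V_p$ with no edges between distinct blocks, so each graph is the disjoint union of the induced subgraphs $\Gamma_p:=[V_p]$. Since an induced $P_4$ is connected, any copy of it must lie inside a single block; hence it suffices to prove that each $\Gamma_p$ is $P_4$-free (equivalently, cographs are closed under disjoint union). For $\Gamma_{NCC}$ this is immediate: by Lemma \ref{lm_1}(i) any two $p$-elements are already adjacent, so $\Gamma_p$ is a complete graph and therefore a cograph, and the disjoint union over $p$ settles the $\Gamma_{NCC}$ case.

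The remaining case, $\Gamma_{CCC}$ within a single prime, is the genuinely hard one, because here $\Gamma_p$ need \emph{not} be complete: two $p$-elements in a common Sylow subgroup need not commute. The approach I would take is to suppose $v_1\!-\!v_2\!-\!v_3\!-\!v_4$ is an induced $P_4$ in $\Gamma_p$ and to exploit that $x^G\sim y^G$ precisely when the class $y^G$ meets $C_G(x)$, together with the fact that each $C_G(x)$ is a $p$-group. Conjugating representatives, I would arrange a representative $x_1$ of $v_1$ and a representative $x_3$ of $v_3$ to lie in $C:=C_G(x_2)$ (so $x_2\in Z(C)$), and a representative $x_4$ of $v_4$ to lie in $D:=C_G(x_3)$ (so $x_3\in Z(D)$); the three non-edges then become non-commuting relations inside the $p$-groups $C$ and $D$, and the aim is to manufacture from these a commuting pair realising one of the forbidden edges, giving a contradiction. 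This localisation step is exactly where I expect the main obstacle to sit: the data recorded inside $C$ and $D$ do not by themselves force a contradiction, and the within-prime commuting-conjugacy graph of a $p$-group is governed by the commuting structure of its classes (for extraspecial groups it is a symplectic orthogonality graph), which is not $P_4$-free in general. Closing this case therefore cannot be purely formal and would require the detailed structure theory of EPPO groups; I would expect the bulk of the work, and the point most in need of careful justification, to be precisely this within-prime analysis for $\Gamma_{CCC}$.
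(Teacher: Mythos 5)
Your handling of $\Gamma_{NCC}(G)$ is complete and is essentially the paper's argument: no edge joins a $p$-class to a $q$-class for $p\neq q$ (a nilpotent subgroup generated by elements of coprime order would contain an element of composite order), and within one prime Lemma~\ref{lm_1}(i) makes each block a clique, so the graph is a disjoint union of cliques and hence a cograph. For $\Gamma_{CCC}(G)$ the paper performs the same reduction and then concludes from Lemma~\ref{CCC} that, since any two $p$-classes are at distance at most $2$, no induced $P_4$ can occur. That inference is exactly the step you declined to take, and you were right to decline: an induced $P_4$ on $v_1,v_2,v_3,v_4$ only forbids the three chords among those four vertices and is perfectly compatible with $d(v_1,v_4)=2$ via a fifth vertex. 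The correct criterion is that \emph{every connected induced subgraph} has diameter at most $2$, not that the ambient graph (or the block of $p$-classes) does. So the paper's proof of the $\Gamma_{CCC}$ half has a genuine logical gap where your sketch has an honest open end.

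Moreover, your suspicion about the within-prime case is not merely a difficulty to be overcome; it refutes the $\Gamma_{CCC}$ half of the theorem as stated. Let $P$ be an extraspecial group of order $p^5$; being a $p$-group, it is an EPPO group. For non-central $x$ one has $[x,P]=Z(P)$, so $x^P=xZ(P)$, and the non-central classes are in bijection with the nonzero vectors of the symplectic space $V=P/Z(P)\cong\mathbb{F}_p^4$; since $[xz,yz']=[x,y]$ for central $z,z'$, the classes $x^P$ and $y^P$ are adjacent in $\Gamma_{CCC}(P)$ if and only if $\langle\bar x,\bar y\rangle=0$. Taking a symplectic basis $e_1,f_1,e_2,f_2$, the four classes corresponding to $e_1$, $e_2$, $f_1$, $f_1+f_2$ induce a $P_4$: the pairs $(e_1,e_2)$, $(e_2,f_1)$, $(f_1,f_1+f_2)$ are orthogonal, while $\langle e_1,f_1\rangle=\langle e_1,f_1+f_2\rangle=\langle e_2,f_1+f_2\rangle=1$. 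This happens even though the graph has diameter $2$, in full agreement with Lemma~\ref{CCC}. Consequently no completion of the $\Gamma_{CCC}$ case is possible: the theorem must either be restricted to $\Gamma_{NCC}$ or be proved under additional hypotheses on the Sylow subgroups (note the same issue propagates to the later results that cite this theorem for groups whose Sylow subgroups can be extraspecial of order at least $p^5$).
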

\begin{proof}
Suppose that $ x^{G} \sim y^{G} \sim z^{G} \sim w^{G} $ is a path in both $ \Gamma_{CCC}(G) $ and $ \Gamma_{NCC}(G) $. Clearly, the elements of $ G $ have orders that are powers of some prime. If such a path exists, then all the vertices along the path must have orders that are powers of the same prime. Otherwise, $ G $ would contain an element whose order is the product of two distinct primes, contradicting the assumption that $ G $ is an EPPO-group.
Now, by Lemma \ref{CCC}, in $ \Gamma_{CCC}(G) $, the distance between any two elements of the same prime power order is at most $ 2 $. Furthermore, by Lemma \ref{lm_1}, in $ \Gamma_{NCC}(G) $, the distance between any two elements whose orders are powers of the same prime is at most $ 1 $. Therefore, such a path cannot exist in either $ \Gamma_{CCC}(G) $ or $ \Gamma_{NCC}(G) $.
\end{proof}

Note that for an EPPO-group, both of $\Gamma_{CCC}(G)$ and $\Gamma_{NCC}(G)$ are also chordal graphs.

\begin{theorem}\label{lem1}
Let $G$ be an EPPO-group. Then $\Gamma_{CCC}(G)$ is $2K_{2}$-free if and only if one of the following holds:
\begin{enumerate}
\item[{\rm (a)}] $G$ is an abelian $p$-group, where $p$ is a prime;
\item[{\rm (b)}] $G$ is a non-abelian $p$-group with at most one element of order at least $p^2$;
\item[{\rm (c)}] $G$ is a non-abelian and not a $p$-group with Sylow $q$-subgroup having at most one element of order at least $q^2$.
\end{enumerate}
\end{theorem}
\begin{proof}
Let $G$ be an EPPO-group such that $\Gamma_{CCC}(G)$ is $2K_2$-free. We consider the following two cases:  

\medskip
\noindent {\bf Case 1.} $G$ is a $p$-group.
\medskip

If $G$ is abelian, the result is immediate. Now assume $G$ is non-abelian $p$-group. Then by proposition \ref{pro.imp} there exists $a_1,a_2\in G$ which are non-commuting and non-conjugate. If order of $a_1,a_2$ is at least $p^2$, then the pairs $\{a_1,a_1^p\}$ and $\{a_2,a_2^p\}$ will form a  $2K_2$. Hence $G$  can have at most one element of order at least $p^2$.

\medskip
\noindent {\bf Case 2.} $G$ is not a $p$-group.
\medskip

If $G$ is abelian, the result is immediate. Assume $G$ is non-abelian. Let $P$ be a non-abelian Sylow $q$-subgroup of $G$. Since $P$ is a $q$-group so by Case $1$, $P$ can have at most one element of order at least $q^2$.

The proof of the converse is straightforward.
\end{proof}
\begin{theorem}\label{lem2}
Let $G$ be an EPPO-group. Then $\Gamma_{NCC}(G)$ is $2K_2$-free if and only if one of the following holds:
\begin{enumerate}
    \item[{\rm (a)}] $G$ is abelian;
    \item[{\rm (b)}] $G$ is a $p$-group for some prime $p$;
    \item[{\rm (c)}] $G$ is a non-abelian and not a  $p$-group in which exactly one Sylow subgroup contains elements of prime power order, while all other Sylow subgroups are EPO-groups.
\end{enumerate}
\end{theorem}
\begin{proof}
Suppose first that $\Gamma_{NCC}(G)$ is $2K_2$-free. We next consider three cases: 

\medskip
\noindent {\bf Case 1.} $G$ is cyclic or abelian.
\medskip

If $G$ is either cyclic or abelian, then the nilpotent conjugacy class graph $\Gamma_{NCC}(G)$ is a complete graph. Since a complete graph does not contain $2K_2$ as a subgraph, it follows that $\Gamma_{NCC}(G)$ is $2K_2$-free.

\medskip
\noindent {\bf Case 2.} $G$ is a $p$-group.
\medskip

Since every $p$-group is nilpotent, which ensures that $\Gamma_{NCC}(G)$ is a complete graph. As in the previous case, a complete graph is $2K_2$-free, so the result follows.

\medskip
\noindent {\bf Case 3.} $G$ is a non-abelian group and not a $p$-group.
\medskip

Suppose $G$ has at least two distinct Sylow subgroups, say $P_1$ and $P_2$, each containing elements of prime power order. Let $a \in P_1$ and $b \in P_2$ be such elements. Then, the pairs $\{a^{G}, (a^{p})^{G}\}$ and $\{b^{G}, (b^{p})^{G}\}$ form a $2K_2$ subgraph in $\Gamma_{NCC}(G)$, contradicting our assumption that $\Gamma_{NCC}(G)$ is $2K_2$-free.

Therefore, $G$ must have at most one Sylow subgroup containing elements of prime power order, while all other Sylow subgroups must be EPO-groups, as required.

The proof if the converse is straightforward.
\end{proof}

\begin{theorem}\label{claw2}
Let $G$ be an EPPO-group. Then $\Gamma_{NCC}(G)$ is claw-free. 
\end{theorem}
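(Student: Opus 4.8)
The plan is to show that a claw cannot occur by proving that, in an EPPO group, the adjacency relation in $\Gamma_{NCC}(G)$ can only hold between classes whose representatives have orders that are powers of one and the same prime. Once this is established, the three leaves of any prospective claw are forced to be pairwise adjacent by Lemma \ref{lm_1}(i), which directly contradicts the defining property of a claw.

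First I would record the crucial observation: \emph{if $x^{G} \sim y^{G}$ in $\Gamma_{NCC}(G)$, then $x$ and $y$ are $p$-elements for a common prime $p$.} To see this, suppose $a \in x^{G}$ has order a power of $p$, $b \in y^{G}$ has order a power of $q$, and $H := \langle a, b \rangle$ is nilpotent. If $p \neq q$, then $a$ and $b$ have coprime orders; since a finite nilpotent group is the internal direct product of its Sylow subgroups, elements of coprime order commute, so $ab = ba$ and hence $|ab| = |a|\,|b|$ is divisible by the two distinct primes $p$ and $q$. As $ab \in G$, this contradicts the EPPO hypothesis that every element has prime power order. Therefore $p = q$, as claimed. (Here $x, y$ are non-central, so these orders are genuine prime powers $> 1$.)

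Next, assume for contradiction that $\Gamma_{NCC}(G)$ contains a claw, with center $x^{G}$ and three pairwise non-adjacent leaves $y_1^{G}, y_2^{G}, y_3^{G}$. Let $p$ be the prime for which $x$ is a $p$-element. Applying the observation above to each edge $x^{G} \sim y_i^{G}$, every $y_i$ is also a $p$-element. Then Lemma \ref{lm_1}(i) yields $d(y_i^{G}, y_j^{G}) \leq 1$ for all $i \neq j$; since the three classes are distinct, this forces $y_i^{G} \sim y_j^{G}$, so the leaves are pairwise adjacent. This contradicts the assumption that they are pairwise non-adjacent, and hence no claw can exist.

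The only genuinely substantive step is the common-prime observation; the remainder is immediate from Lemma \ref{lm_1}. I expect the main subtlety to be making sure the coprime-commuting argument is applied to the nilpotent subgroup $\langle a, b \rangle$ itself rather than to $G$, and confirming that the product $ab$ then lies in $G$ with order divisible by two distinct primes, which is exactly what the EPPO property forbids.
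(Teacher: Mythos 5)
Your proposal is correct and follows essentially the same route as the paper: force all vertices of a prospective claw to be $p$-elements for a common prime via the EPPO hypothesis, then invoke Lemma \ref{lm_1}(i) to make the three leaves pairwise adjacent, a contradiction. Your write-up is in fact more careful than the paper's, since you explicitly justify the common-prime step by noting that in a nilpotent group $\langle a,b\rangle$ elements of coprime order commute, so $ab$ would have order divisible by two distinct primes --- the paper asserts this conclusion without spelling out the argument.
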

\begin{proof}
Suppose, for a contradiction, that $\Gamma_{NCC}(G)$ contains a claw whose the three pendant vertices are ${a_1}^{G}, {a_2}^{G}, {a_3}^{G}$ and the central vertex is $b^G$. Then all these elements must have orders that are powers of the same prime. Otherwise, $G$ would contain an element whose order is a product of two primes, which contradicts the assumption that $G$ is an EPPO-group.
However, by Lemma \ref{lm_1}, the distance between any two elements whose orders are powers of the same prime is at most $1$. This means that for $i \neq j$ ($1 \leq i, j \leq 3$), the distance between ${a_i}^{G}$ and ${a_j}^{G}$ in $\Gamma_{NCC}(G)$ is at most $1$. Consequently, such a claw cannot exist in $\Gamma_{NCC}(G)$.
\end{proof}


\section{Forbidden subgraphs of symmetric and alternating groups}\label{sec.2}
We use $\mathrm{Sym}(n)$ and $\mathrm{Alt}(n)$ to denote the symmetric and alternating groups on $n$ letters, respectively.
In this section, we determine the conditions on $n$ under which the commuting conjugacy class graphs of the symmetric and alternating groups contain specific forbidden subgraphs.

\begin{theorem}\label{thm.co}
$\Gamma_{CCC}(\mathrm{Sym}(n))$ is a cograph, split graph, and a threshold graph if and only if $n\leq 4$. Moreover, $\Gamma_{CCC}(\mathrm{Sym}(n))$ is claw-free if and only if $n\leq 5$.
\end{theorem}
\begin{proof}
Let $G=\text{Sym}(n)$.
Consider the elements $ x = (1,2)(3,4,\dots,n) $ and $ y = (1,2,3)(4,\dots,n) $. If $ \Gamma_{CCC}(G) $ is a cograph, then the distance between $ x^G $ and $ y^G $ is at most 2. However, the only neighbours of $ x^G $ are $ (1,2)^G $ and $ (3,\dots,n)^G $, while the only neighbours of $ y^G $ are $ (1,2,3)^G $ and $ (4,\dots,n)^G $ and the sequence  $$
x^G - (1,2)^G - (1,2,3)^G - y^G
$$
forms a path in $ \Gamma_{CCC}(G) $, implying that $ x^G $ and $ y^G $ are in the same connected component of $ \Gamma_{CCC}(G) $.
Note that the above argument does not apply when $ n = 5 $ since, in this case, $ x^G = y^G $. Nevertheless, for $ n = 5 $
$$
(1,2,3,4)^G - (1,3)(2,4)^G - (1,2)^G - (1,2,3)^G.
$$
 we can observe that $ \Gamma_{CCC}(G) $ contains the  $P_4$ as an induced subgraph.

We next show that $\Gamma_{CCC}(G)$ is neither split nor threshold for $n\geq 5$. Let us assume
$$
a = (1,2)(3,4,\dots,n), \quad b = (3,4,\dots,n), \quad c = (1,2,3)(4,\dots,n), \quad d = (4,\dots,n).
$$
Observe that in $ \Gamma_{CCC}(G) $, there exist edges $ a^G - b^G $ and $ c^G - d^G $.  Furthermore, the only neighbours of $ a^G $ are $ (1,2)^G $ and $ (3,\dots,n)^G $, while the only neighbours of $ c^G $ are $ (1,2,3)^G $ and $ (4,\dots,n)^G $. This implies that neither $ a^G $ nor $ b^G $ is adjacent to $ c^G $ or $ d^G $. Consequently, the vertices $ a^G, b^G, c^G, $ and $ d^G $ together form an induced $ 2K_2 $ subgraph.  Note that this argument does not hold when $ n = 5 $, as in this case, $ a^G = c^G $. However, for $ n = 5 $
$$
(1,2,3)^G - (1,2,3)(4,5)^G, \quad(1,2)(3,4)^G - (1,2,3,4)^G.
$$
give an induced $ 2K_2 $ subgraph in $ \Gamma_{CCC}(G) $.

In order to show that $\Gamma_{CCC}(G)$ is claw-free, suppose $n\geq 7$ and
$$a = (1,2), \quad b = (1,2,3), \quad c = (1,2)(3,4,\dots,n), \quad d = (1,2)(3,4)(5,\dots,n).$$
The only neighbours of $ c^G $ in $ \Gamma_{CCC}(G) $ are $ (1,2)^G $ and $ (3,\dots,n)^G $, while the neighbours of $ d^G $ are $ (1,2)^G, (1,2)(3,4)^G, (5,\dots,n)^G $, and $ (1,2)(5,\dots,n)^G $.  Since $ a^G $ is adjacent to $ b^G, c^G $, and $ d^G $, these four vertices form an induced claw in $ \Gamma_{CCC}(G) $, with $ a^G $ as the central vertex and $ b^G, c^G, d^G $ as its pendant vertices. For $ n = 7 $, we observe that $ b^G $ and $ d^G $ are adjacent. However, if we redefine
$$
a = (1,2), \quad b = (1,2,3), \quad c = (1,2)(3,4,\cdots,7), \quad d = (1,2)(3,4)(5,6),
$$
then the graph $ \Gamma_{CCC}(G) $ still contains an induced claw. Again, for $n=6$,
$$
a = (1,2), \quad b = (1,2,3), \quad c = (1,2)(3,4,5,6), \quad d = (1,2)(3,4)(5,6)
$$ will form a claw.

For the converse, we consider the following cases: 

\medskip
\noindent {\bf Case 1.} $n=2,3$.
\medskip 

For $n=2$ and $n=3$, the graph $\Gamma_{CCC}(\mathrm{Sym}(n))$ has too few vertices to contain an induced $P_4$, $2K_2$, or a claw. Hence, $\Gamma_{CCC}(\mathrm{Sym}(n))$ is $P_4$-free, $2K_2$-free, and claw-free.

\medskip
\noindent {\bf Case 2.} $n=4$.
\medskip 

The non-central conjugacy classes
$$
(1,2)^G,\qquad (1,2)(3,4)^G,\qquad (1,2,3,4)^G
$$
form a complete graph $K_3$ in $\Gamma_{CCC}(\mathrm{Sym}(4))$, while the conjugacy class
$(1,2,3)^G$
is an isolated vertex. Consequently, $\Gamma_{CCC}(\mathrm{Sym}(4))$ contains no induced $P_4$, $2K_2$, or claw.

\medskip
\noindent {\bf Case 3.} $n=5$.
\medskip 

In $\Gamma_{CCC}(\mathrm{Sym}(5))$, the conjugacy classes
$$
(1,2)^G,\qquad (1,2)(3,4)^G,\qquad (1,2,3,4)^G
$$
induce a complete graph $K_3$. Moreover, the conjugacy classes
$$
(1,2)^G,\qquad (1,2,3)^G,\qquad (1,2)(3,4,5)^G
$$
also induce a complete graph $K_3$, whereas the conjugacy class $
(1,2,3,4,5)^G
$
is an isolated vertex. Therefore, $\Gamma_{CCC}(\mathrm{Sym}(5))$ contains no induced claw.
\end{proof}





\begin{theorem}\label{co.alt}
  The following holds for $\mathrm{Alt}(n)$:
  \begin{itemize}
\item[{\rm (I)}] $\Gamma_{CCC}(\mathrm{Alt}(n))$ is a cograph if and only if $n\leq 6$;
\item[{\rm (II)}] $\Gamma_{CCC}(\mathrm{Alt}(n))$ is a split graph as well as a threshold graph if and only if $n\leq 5$;
\item[{\rm (III)}] $\Gamma_{CCC}(\mathrm{Alt}(n))$ is claw-free if and only if $n\leq 6$.
\end{itemize}
\end{theorem}
\begin{proof}
\rm(I) First we prove that for $n\geq 7$, $\Gamma_{CCC}(\mathrm{Alt}(n))$ has an induced subgraph isomorphic to $P_4$.

\begin{itemize}
    \item If $ n $ is even and $ n \geq 10 $, consider the elements
    $$
    x = (1,2,3)(4,\dots,n), \quad y = (1,2,3,4,5)(6,\dots,n).
    $$
    \item If $ n $ is odd and $ n \geq 11 $, consider
    $$
    x = (1,2,3)(4,\dots,n-1), \quad y = (1,2,3,4,5)(6,\dots,n-1).
    $$
\end{itemize}

In both cases, it suffices to observe that $ x^G $ and $ y^G $ belong to the same connected component of $ \Gamma_{CCC}(G) $ but have no common neighbours. The corresponding induced path in $ \Gamma_{CCC}(G) $ is
$$
x^G - (1,2,3)^G - (1,2,3,4,5)^G - y^G.
$$

For $n=8$ and $9$, a similar argument applies, with the induced path
$$
(1,2)(3,4)(5,6)(7,8)^G - (1,2)(3,4)^G - (1,2,3,4,5)^G - (1,2,3,4,5)(6,7,8)^G.
$$

If $ n = 7$, then $ \Gamma_{CCC}(G) $ contains the induced subgraph
$$
(1,2,3)(4,5,6)^G - (1,2,3)^G - (4,5)(6,7)^G - (1,2)(4,5,6,7)^G.
$$

(\rm{II}) Next we prove that $\Gamma_{CCC}(G)$ is neither split nor threshold for $n\geq 6$.

\begin{itemize}
    \item If $ n $ is even and $ n \geq 10 $, consider the elements
    $$
    a = (1,2,3)(4,\dots,n), \quad b = (4,\dots,n), \quad c = (1,2,3,4,5)(6,\dots,n), \quad d = (6,\dots,n).
    $$
    Observe that in $ \Gamma_{CCC}(G) $, we have $ \{a^G,b^G\}$  and $\{c^G,d^G\}$ together form a $2K_2$.

    \item If $ n $ is odd and $ n \geq 11 $, consider the elements
    $$
    a = (1,2,3)(4,\dots,n-1), b = (4,\dots,n-1), c = (1,2,3,4,5)(6,\dots,n-1), d = (6,\dots,n-1).
    $$
    By an argument similar to Theorem \ref{thm.co}, we observe that $ \{a^G,b^G\}$ and $\{c^G,d^G\}$ together form a $2K_2$.
\end{itemize}

For $n=8$ and $n=9$, the above argument does not hold, but in these cases, we observe that
$$
(1,2)(3,4)(5,6)(7,8)^G - (1,2)(3,4)^G, \quad (1,2,3)(4,\dots,8)^G - (4,\dots,8)^G
$$
forms a $ 2K_2 $. Suppose $n=7$, consider  $a = (1,\dots,7), \quad b = (1,2)(3,4),$ and $c=(1,2)(3,4,5,6)$. We observe that $a$ and $a^{-1}$ belong to different conjugacy classes. Hence,
$a^G - (a^{-1})^G$ and $b^G - c^G$
form a $ 2K_2 $. For $ n = 6 $, the pairs
$(1,2,3)^G - (1,2,3)(4,5,6)^G,(1,2)(3,4)^G - (1,2)(3,4,5,6)^G$
will form $2K_2$.




(\rm{III}) Our next aim is to show that $\Gamma_{CCC}(G)$ is not claw-free for $n\geq 7$.
\begin{itemize}
    \item If $ n $ is even and $ n \geq 10 $, consider the elements
    $$
    a = (1,2,3), \quad b = (1,\dots,5), \quad c = (1,2,3)(4,\dots,n), \quad d = (1,2)(3,4,5)(6,\dots,n-1).
    $$
    \item If $ n $ is odd and $ n \geq 11 $, take
    $$
    a = (1,2,3), \quad b = (1,\dots,5), \quad c = (1,2,3)(4,\dots,n-1), \quad d = (1,2)(3,4,5)(6,\dots,n).
    $$
\end{itemize}
By a similar argument as above, the central vertex $ a^G $ with three pendant vertices $ b^G, c^G, d^G $ will form a claw in $ \Gamma_{CCC}(G) $.

For $ n =7,\ 8,\ 9 $, consider the elements
$$
a = (1,2,3), \quad b = (1,2,3,4,5), \quad c = (1,2)(3,4)(5,6,7), \quad d = (1,2,3)(4,5,6).
$$
Using similar reasoning, the subgraph induced by $ a^G, b^G, c^G, d^G $ will also form a claw.

For the converse, we consider the following cases: 

\medskip
\noindent {\bf Case 1.} $n=3,4$.
\medskip 

For $n=3$ and $n=4$, the graph $\Gamma_{CCC}(\mathrm{Alt}(n))$ does not have sufficient vertices to form an induced $P_4$, $2K_2$, or a claw. Hence, $\Gamma_{CCC}(\mathrm{Alt}(n))$ is $P_4$-free, $2K_2$-free, and claw-free.

\medskip
\noindent {\bf Case 2.} $n=5$.
\medskip 

The non-central conjugacy classes
$$
a=(1,2,3)^G,\qquad b=(1,2)(3,4)^G,\qquad c=(1,2,3,4,5)^G,\qquad c^{-1}=(1,5,4,3,2)^G.
$$
Now, $a$, $b$ are isolated vertices and $c,\ c^{-1}$ form $2K_2$ in $\Gamma_{CCC}(\mathrm{Alt}(5))$. Consequently, $\Gamma_{CCC}(\mathrm{Alt}(5))$ contains no induced $P_4$, $2K_2$, or claw.

\medskip
\noindent {\bf Case 3.} $n=6$.
\medskip 

In $\Gamma_{CCC}(\mathrm{Alt}(6))$, the conjugacy classes
$$
(1,2,3)^G,\qquad (1,2,3)(4,5,6)^G
$$
induce the graph $K_2$. Moreover, the conjugacy classes
$$
(1,2)(3,4)^G,\qquad (1,2)(3,4,5,6)^G,\ \mbox{and}\qquad (1,2,3,4,5)^G,\qquad (1,5,4,3,2)^G
$$
induce the graph $2K_2$. Consequently, $\Gamma_{CCC}(\mathrm{Alt}(6))$ contains no induced $P_4$, or claw.
\end{proof}

\section{Sporadic simple groups}\label{sec.3}
This section is devoted primarily to sporadic simple groups. Recall that there are $26$ sporadic simple groups. Among them, $\mathrm{M}_{11}$ is a subgroup of every sporadic group except
$$
\mathrm{J}_1,\ \mathrm{J}_2,\ \mathrm{J}_3,\ \mathrm{He},\ \mathrm{Ru},\ \mathrm{Th},\ \mbox{and } \mathrm{M}_{22}.
$$
We begin by determining the forbidden subgraphs of the Mathieu group $\mathrm{M}_{11}$. Exploiting the fact that $\mathrm{M}_{11}$ is a subgroup of most sporadic groups, we then determine the forbidden subgraphs for those sporadic groups containing $\mathrm{M}_{11}$. Finally, we determine the forbidden subgraphs for the remaining sporadic groups, namely those that do not contain $\mathrm{M}_{11}$ as a subgroup.
\begin{theorem}\label{thm.M11}
The graph $\Gamma_{CCC}(M_{11})$ is a cograph, a chordal graph, and a claw-free graph. However, it is neither a split graph nor a threshold graph.
\end{theorem}

\begin{proof}
Using GAP, we determine the structure of $\Gamma_{CCC}(M_{11})$ and obtain the following observations.

\begin{itemize}
    \item The induced subgraph on $\{2A,4A,8A,8B\}$ is a complete graph.
    \item The induced subgraph on $\{2A,3A,6A\}$ is also a complete graph.
    \item The vertex $5A$ is isolated.
    \item The vertices $11A$ and $11B$ are adjacent and have no other neighbors.
    \item The edges $\{3A,6A\}$ and $\{11A,11B\}$ induce a copy of $2K_2$.
\end{itemize}
Hence,
$
\Gamma_{CCC}(M_{11})
   \cong K_4 \cup K_3 \cup K_2 \cup K_1,
$
where the two complete subgraphs $K_4$ and $K_3$ intersect at the vertex $2A$.

Since every connected component is complete (or obtained by gluing complete graphs along a cut vertex), the graph contains no induced $P_4$ or induced cycle of length at least $4$. Therefore, $\Gamma_{CCC}(M_{11})$ is both a cograph and a chordal graph. Moreover, every vertex has a neighborhood that is a clique, and hence the graph is claw-free.
Moreover, the induced subgraph on the vertices $\{3A,6A,11A,11B\}$ is isomorphic to $2K_2$. Since $2K_2$ is a forbidden induced subgraph for both split graphs and threshold graphs, it follows that $\Gamma_{CCC}(M_{11})$ is neither a split graph nor a threshold graph.
\end{proof}

Note that $\mbox{M}_{11}$ is a subgroup of the Suzuki groups $G$. It follows that $\Gamma_{CCC}(G)$ is neither split nor threshold (see Corollary \ref{cor.sporadic}).  Now we are going to prove that the commuting conjugacy class graph of the Suzuki groups are always  cographs. Though this does not hold for chordal graphs. Additionally, we demonstrate that the conjugacy class graph of the Suzuki group is not claw-free.
\begin{theorem}
\label{suz_co}
Let $G\cong{}^2B_2(q)$, where $q=2^{2n+1}$. Then $\Gamma_{CCC}(G)$ is cograph.
\end{theorem}

\begin{proof}
 Let $r=2^n$. Then the non-central conjugacy classes of $G$ are:
\begin{itemize}
    \item A single conjugacy class $D_1$ with order of the representative $2$,
    \item Two conjugacy classes $D_2$ and $D_3$ with order of the representatives $4$,
    \item $\frac{(q-2)}{2}$ conjugacy classes $A_1,\cdots, A_{\frac{(q-2)}{2}}$ with order of the representatives $q-1$,
    \item $\frac{(q-2r)}{4}$ conjugacy classes $B_1,\cdots, B_{\frac{(q-2r)}{4}}$ with order of the representatives $q-2r+1$,
    \item $\frac{(q+2r)}{4}$ conjugacy classes $X_1,\cdots, X_{\frac{(q+2r)}{4}}$ with order of the representatives $q+2r+1$.
\end{itemize}
Now, the subgroups of $G$ are:
$C_{2}$, $C_{4}$, $C_{q-1}$, $D_{2(q-1)}$, $C_{q\pm 2r+1}\rtimes H$, where $H$ is a subgroup of order $4$, small order Suzuki groups, $C_{q\pm 2r+1}$, and a non-abelian $2$-group of order $q^2$. Among these, the only abelian subgroups are $C_{2}, C_{4}$, and $C_{q-1}$.
Thus, in $\Gamma_{CCC}(G)$ the only vertices that are connected are $D_1$ with $D_2$ and $D_3$ if $\langle D_1,D_2\rangle\cong C_4$ and $\langle D_1,D_3\rangle \cong C_4$. Otherwise, if $\langle D_1,D_2\rangle\cong C_8$ and $\langle D_1,D_3\rangle\cong C_8$, then $\Gamma_{CCC}(G)$ is null graphs. Thus, in both cases, $\Gamma_{CCC}(G)$ is a cograph.
\end{proof}


By the similar approach as in
Theorem~\ref{suz_co}, we have the following result.

\begin{theorem}
Let $G \cong {}^2B_2(q)$, where $q = 2^{2n+1}$. Then $\Gamma_{CCC}(G)$ is a chordal graph.
\end{theorem}

\begin{observation}\label{suz.claw}
Let $G \cong {}^2B_2(q)$, where $q = 2^{2n+1}$, then $\Gamma_{CCC}(G)$ is claw-free as it does not have sufficient connected vertices to form a claw.
\end{observation}

\begin{theorem}\label{thm.M12}
The graph $\Gamma_{CCC}(M_{12})$ is neither a cograph nor a chordal graph. Moreover, it is not claw-free.
\end{theorem}
\begin{proof}
The vertices $6A$, $3A$, $3B$, and $6B$ of $\Gamma_{CCC}(M_{12})$ induce a path
    $
    6A-3A-3B-6B,
    $
hence, $\Gamma_{CCC}(M_{12})$ is not a cograph.
The vertices $2A$, $3A$, $3B$, and $2B$ induce the $4$-cycle
    $
    2A-3A-3B-2B-2A,
    $
therefore, $\Gamma_{CCC}(M_{12})$ is not chordal.
The vertex $2A$ is adjacent to each of $4A$, $5A$, and $6A$, while these three vertices are pairwise non-adjacent. Hence, the subgraph induced by
$
\{2A,4A,5A,6A\}
$
is a claw, as desired.
Thus, $\Gamma_{CCC}(M_{12})$ is neither a cograph nor a chordal graph, and it is not claw-free.
\end{proof}

\begin{theorem}\label{thm.M22}
The graph $\Gamma_{CCC}(M_{22})$ is a cograph and a chordal graph, but it is neither a split graph nor a threshold graph. Moreover, it is not claw-free.
\end{theorem}
\begin{proof}
Note that the subgraphs of $\Gamma_{CCC}(M_{22})$ induced by
    \[
    \{2A,3A,6A\},\quad
    \{2A,4A,4B\},\quad
    \{2A,6A,4B\},\quad
    \{2A,4B,8A\}
    \]
are all complete. Now
the vertices $7A$ and $7B$ are adjacent only to each other. Likewise, the vertices $11A$ and $11B$ are adjacent only to each other. Consequently, the edges
$\{7A,7B\}$ and $\{11A,11B\}$
induce a copy of $2K_2$. Hence, $\Gamma_{CCC}(M_{22})$ is neither a split graph nor a threshold graph.
Clearly, the graph contains no induced $P_4$ or induced cycle of length at least four, it follows that it is a cograph as well as a chordal graph.
Furthermore, the vertex $2A$ is adjacent to each of $3A$, $4A$, and $8A$, while these three vertices are pairwise non-adjacent. Hence, the induced subgraph of $\Gamma_{CCC}(M_{22})$ by
$
    \{2A,3A,4A,8A\}
$
is a claw.
Therefore, $\Gamma_{CCC}(M_{22})$ is a cograph and a chordal graph, but it is neither a split graph nor a threshold graph. Moreover, it is not claw-free.
\end{proof}







\begin{theorem}\label{cor.sporadic}
For any sporadic simple group $G$,
$\Gamma_{CCC}(G)$ is not a cograph, except for $M_{11}$, $M_{22}$, and the Suzuki groups.
\end{theorem}
\begin{proof}
In $\Gamma_{CCC}(\mbox{Alt}(8))$, there is an induced path
\[
(1,2,3)(4,5,6,7,8)
-
(1,2,3)
-
(4,5)(6,7)
-
(1,2)(4,5,6,7).
\]
Hence, $\Gamma_{CCC}(\mbox{Alt}(8))$ is not a cograph.
Since $\mbox{Alt}(8)$ is a subgroup of the sporadic groups
$$
M_{23},\ Hs,\ Co_{1},\ Co_{2},\ Co_{3},\ Ru,\ \mbox{and}\ M_{24},
$$
the commuting conjugacy class graph of each of these groups contains the corresponding induced subgraph arising from $\mbox{Alt}(8)$.

Indeed, the commutativity relations are preserved under subgroup inclusion. If two conjugacy classes of distinct orders are adjacent in
$\Gamma_{CCC}(\mbox{Alt}(8))$, then the corresponding classes remain adjacent in the ambient sporadic group. Likewise, if they are non-adjacent in $\mbox{Alt}(8)$, they remain non-adjacent in the larger group. Consequently, the induced $P_4$ in
$\Gamma_{CCC}(\mbox{Alt}(8))$ is also an induced subgraph of
$\Gamma_{CCC}(G)$ for each of the above sporadic groups. Therefore, none of
$$
\Gamma_{CCC}(M_{23}),\,
\Gamma_{CCC}(Hs),\,
\Gamma_{CCC}(Co_{1}),\,
\Gamma_{CCC}(Co_{2}),\,
\Gamma_{CCC}(Co_{3}),\,
\Gamma_{CCC}(Ru),\,
\Gamma_{CCC}(M_{24})
$$
is a cograph.

Since $M_{24}$ is a subgroup of $J_{4}$, it follows that
$\Gamma_{CCC}(J_{4})$ is not a cograph. Likewise,
$\mbox{Sym}(10)$ and $\mbox{Sym}(12)$ are subgroups of $Fi_{22}$ and
$Fi_{23}$, respectively, while $Fi_{23}$ is a subgroup of $Fi_{24}'$.
Hence, by Theorem~\ref{thm.co},
$$
\Gamma_{CCC}(Fi_{22}),\quad
\Gamma_{CCC}(Fi_{23}),\quad
\Gamma_{CCC}(Fi_{24}')
$$
are all not cographs. Since $Fi_{23}$ is also a subgroup of $BM$,
$\Gamma_{CCC}(BM)$ is not a cograph.

Furthermore, $\mbox{Alt}(12)$ is a subgroup of both $M$ and $HN$. Therefore,
Theorem~\ref{thm.co} implies that both
$\Gamma_{CCC}(M)$ and $\Gamma_{CCC}(HN)$ are not cographs.

For the remaining sporadic groups, direct GAP computations show that each commuting conjugacy class graph contains an induced path on four vertices:
\begin{itemize}
\item $\Gamma_{CCC}(McL)$: $3B-2A-5A-5B$,
\item $\Gamma_{CCC}(J_1)$: $15B-3A-2A-10B$,
\item $\Gamma_{CCC}(J_2)$: $15A-3A-2A-10C$,
\item $\Gamma_{CCC}(J_3)$: $15B-3A-2A-10B$,
\item $\Gamma_{CCC}(Th)$: $3B-2A-7A-21A$,
\item $\Gamma_{CCC}(He)$: $7C-3A-2A-10A$,
\item $\Gamma_{CCC}(Ly)$: $15C-5B-2A-7A$,
\item $\Gamma_{CCC}(O'N)$: $4B-2A-5A-15A$.
\end{itemize}
Thus, none of these graphs is a cograph.
Finally, by Theorem~\ref{thm.M11}, Theorem~\ref{thm.M22}, and Theorem~\ref{suz_co}, the commuting conjugacy class graphs of $M_{11}$, $M_{22}$, and the Suzuki groups are cographs.
\end{proof}

\begin{theorem}\label{split.sporadic}
For every sporadic simple group $G$, the graph
$\Gamma_{CCC}(G)$ is neither a split graph nor a threshold graph.
\end{theorem}
\begin{proof}
The graph $\Gamma_{CCC}(M_{11})$ contains an induced copy of $2K_2$
formed by the two edges
$\{2A,4A\}$ and $\{3A,6A\}$.
Since the vertices involved have distinct element orders, they remain distinct conjugacy classes in every sporadic group containing a subgroup isomorphic to $M_{11}$. Moreover, commutativity is preserved under subgroup inclusion. Hence every sporadic group containing $M_{11}$ also contains an induced copy of $2K_2$ in its commuting conjugacy class graph. Therefore, these graphs are not split.

The only sporadic groups not containing $M_{11}$ are
$$
J_1,\ J_2,\ J_3,\ He,\ Ru,\ Th,\ \text{and } M_{22}.
$$
Now, $\mbox{Sym}(5)$ is a subgroup of each of
$
He,\ Ru,\ Th,\ \mbox{and } M_{22}.
$
In $\Gamma_{CCC}(\mbox{Sym}(5))$, the edges
$$
(1,2,3)^G-(1,2,3)(4,5)^G
\ \mbox{and }
 (1,2)(3,4)^G-(1,2,3,4)^G
$$
form an induced copy of $2K_2$. Consequently,
$\Gamma_{CCC}(He)$,
$\Gamma_{CCC}(Ru)$,
$\Gamma_{CCC}(Th)$, and
$\Gamma_{CCC}(M_{22})$
are not split.

Finally, using the power-up property (\cite{overleaf_hyperlinks}), we obtain induced copies of $2K_2$ in the remaining three sporadic groups:

\begin{itemize}
    \item In $\Gamma_{CCC}(J_1)$, the vertices
    $15A$, $15B$, $19A$, and $19B$ induce a copy of $2K_2$. Since $\gcd(15,19)=1$ and $J_1$ has no element of order $15\cdot19$, there are no edges joining the two pairs.

    \item In $\Gamma_{CCC}(J_2)$, the vertices
    $3A$, $12A$, $10A$, and $10B$
    induce a copy of $2K_2$.

    \item In $\Gamma_{CCC}(J_3)$, the vertices
    $17A$, $17B$, $19A$, and $19B$
    induce a copy of $2K_2$. Since $\gcd(17,19)=1$ and there is no element of order $17\cdot19$ in $J_3$, the two edges are non-adjacent.
\end{itemize}
Thus every sporadic simple group has a commuting conjugacy class graph containing an induced copy of $2K_2$. Therefore, $\Gamma_{CCC}(G)$ is not a split graph. Since every threshold graph is split, it follows immediately that $\Gamma_{CCC}(G)$ is not a threshold graph.
\end{proof}

In Theorem \ref{claw.sporadic} below, we are constantly using \cite{will}, when we are saying about the subgroups of the sporadic groups without mentioning the reference.
\begin{theorem}\label{claw.sporadic}
For any sporadic group $G$, the $\Gamma_{CCC}(G)$ is not claw-free except for $M_{11}\, \mbox{Suzuki}$, and J$_1$.
\end{theorem}
\begin{proof}
From the Theorem \ref{thm.M11}, we know $M_{11}$ is claw-free. Again from Theorem \ref{thm.M12} and Theorem \ref{thm.M22} respectively we know none of $M_{12}$ and $M_{22}$ is claw-free.   The elements in the conjugacy classes which are forming claw are different in orders. Now, $M_{22}$ is a subgroup of both $M_{23}$ and $M_{24}$. So they are also not claw-free. Hence except $M_{11}$ none of the Mathieu groups is claw-free.\par
For both the Conway groups HS and McL, $M_{22}$ is a maximal subgroup. Again, $M_{23}$ is a maximal subgroups for the Conway groups $Co_2$ and $Co_3$. Also, $Co_2$ is a maximal subgroup in the Conway group $Co_1$. So finally we can say, $M_{22}$ is a subgroup in all the Conway groups. Hence none of the Conway groups is claw-free.\par
$\mbox{Sym}(10),\ \mbox{Sym}(12)$ and $\mbox{Alt}(9)$ are the subgroups of the Fischer groups $Fi_{22},\ Fi_{23}$ and $Fi'_{24}$ respectively. Now, from Theorem \ref{thm.co}, we can say none of the $Fi_{22}$ and $Fi_{23}$ is claw-free. Again, if we consider the elements $a=(1,2,3),\ b=(1,2,3,4,5),\ c=(1,2)(3,4)(5,6,7)$ and $d=(1,2,3)(4,5,6,7,8,9)$ in $\mbox{Alt}(9)$, then  $\{a^G,\ b^G,\ c^G,\ d^G\}$ form a claw in $\Gamma_{CCC}(\mbox{Alt}(9))$ with central vertex $a^G$ and pendent vertices $b^G,\ c^G,\ d^G$. Hence $Fi'_{24}$ is also not claw-free. \par
$\mbox{Alt}(12)$ is a subgroup in both the Monster and small monster group HN. Now by Theorem \ref{co.alt}, none of them is claw free. Again, $Fi_{23}$ is a subgroup of the Baby Monster group, so it is also not claw-free.\par
Now from \cite{overleaf_hyperlinks}, using power up property we have the following:
\begin{itemize}
\item In He, $2A$ is some powers of all of $10A$, $12A$, and $14A$ respectively. So $2A$ commutes with all of  $10A$, $12A$, and $14A$. But $10A$, $12A$, and $14A$ do not commute with each other, as there does not have any elements of order $30$ or $35$ or $42$ in He.
\item For the same reasons in Ru, with central vertices $2B$ and pendant vertices $10B,\ 14B$, $26B$ form a claw.
\item In Ly, with central vertices $2A$ and pendant vertices $18A,\ 20A$, $22A$ form a claw.
\item In O'N, with central vertices $2A$ and pendant vertices $10A,\ 12A$, and $14A$ form a claw.
 \item In Th, with central vertices $3A$ and pendant vertices $21A,\ 24A$, and $39A$ form a claw.
\item In J$_2$, with central vertices $2A$ and pendant vertices $8A,\ 10A$, and $12A$ form a claw, where using GAP we checked the conjugacy classes $8A$ and $12A$ do not commute.
\item In J$_3$, with central vertices $2A$ and pendant vertices $8A,\ 10C$, and $12A$ form a claw, where using GAP we checked the conjugacy classes $8A$ and $12A$ do not commute.
\end{itemize}
Again using GAP, we checked J$_1$ does not have any claw. From the Observation \ref{suz.claw}, we know that  Suzuki groups are claw-free.
\end{proof}


\section{Dihedral groups, dicyclic groups, generalized dihedral groups}\label{sec.5}
In this section, we examine the forbidden subgraphs of the following groups:

\begin{itemize}
    \item The \textbf{dihedral group} $ D_{2n} = \langle x, y \mid x^n = y^2 = 1, \ yxy^{-1} = x^{-1} \rangle $.
    \item The \textbf{dicyclic group} $ T_{4n} = \langle x, y \mid x^{2n} = 1, \ x^n = y^2, \ y^{-1}xy = x^{-1} \rangle $ (see Chapter 12 of \cite{MR2866265} for further details).
    \item The \textbf{generalized dihedral group} $ G = A \rtimes \langle b \rangle $, where $ A $ is an abelian group of order $ n $, and $ \langle b \rangle $ is a cyclic group of order 2 (see Chapter 2 of \cite{MR3643210} for more details).
\end{itemize}
\begin{theorem}\label{thm1}
 Let $ G $ be a finite group. We have the following:
\begin{enumerate}
    \item[{\rm(a)}] If $ G $ is the dihedral group $ D_{2n} $, then $ \Gamma_{CCC}(G) $ is always a cograph and chordal. Moreover, it is a split as well as a threshold graph when $n \not \equiv 2 \pmod{4}$.
    \item[{\rm(b)}] If $ G $ is the dicyclic group $ T_{4n} $, then $ \Gamma_{CCC}(G) $ is always a cograph and {chordal}. Moreover, it is a split as well as a threshold graph when $ n $ is even.
\end{enumerate}
\end{theorem}
\begin{proof}
\noindent (a) From \cite{MR4050270}, we know that the commuting conjugacy class graph of $D_{2n}$ is given by
$$
\Gamma_{CCC}(D_{2n})=
\begin{cases}
K_{\frac{n-1}{2}} \dot{\cup} K_1, & \text{if } n \text{ is odd},\\
K_{\frac{n}{2}-1} \dot{\cup} 2K_1, & \text{if }
n\equiv0\pmod{4} ,\\
K_{\frac{n}{2}-1} \dot{\cup} K_2, & \text{if } n  \equiv 2 \pmod{4},
\end{cases}
$$
where $ \dot{\cup} $ denotes the disjoint union.
It follows from the above structure that $ \Gamma_{CCC}(D_{2n}) $ is always $ P_4 $-free, which implies that it is a {cograph}. Moreover, it is clear that $ \Gamma_{CCC}(D_{2n}) $ is always $ C_n $-free for all $ n \geq 4 $, making it a {chordal graph}. Additionally, when $ n $ is even and $ n/2 $ is odd, the graph $ \Gamma_{CCC}(D_{2n}) $ contains $ 2K_2 $, meaning that in this case, it is neither a {split graph} nor a {threshold graph}.

(b) From \cite{MR4050270}, we know that the commuting conjugacy class graph of $ T_{4n} $ is given by
$$
\Gamma_{CCC}(T_{4n})=
\begin{cases}
K_{n-1} \dot{\cup} 2K_1, & \text{if } n \text{ is even},\\
K_{n-1} \dot{\cup} K_2, & \text{if } n \text{ is odd}.
\end{cases}
$$
From this structure, it is evident that $ \Gamma_{CCC}(T_{4n}) $ is always $ P_4 $-free, making it a {cograph}. Furthermore, it is always $ C_n $-free for all $ n \geq 4 $, ensuring that it is a {chordal graph}. However, when $ n $ is odd, $ \Gamma_{CCC}(T_{4n}) $ contains $ 2K_2 $, indicating that in this case, it is neither a {split graph} nor a {threshold graph}.
\end{proof}
\begin{corollary}
    For all the groups $G$ mentioned in Theorem \ref{thm1}, $\Gamma_{CCC}(G)$ is claw-free.
\end{corollary}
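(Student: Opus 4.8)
The plan is to exploit the explicit structure of $\Gamma_{CCC}(G)$ already established in Theorem \ref{thm1}. In every case listed there---for the dihedral group $D_{2n}$ ($n$ odd, $n\equiv 0\pmod 4$, or $n\equiv 2\pmod 4$) and for the dicyclic group $T_{4n}$ ($n$ even or $n$ odd)---the graph $\Gamma_{CCC}(G)$ is a disjoint union of complete graphs, possibly together with one or two isolated vertices (each of which is itself a copy of $K_1$). In other words, each connected component of $\Gamma_{CCC}(G)$ is a clique. This single structural fact is all the argument requires.

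First I would recall the relevant characterization of a claw: by the definition in the introduction, a graph contains an induced claw $K_{1,3}$ if and only if it has a vertex possessing three pairwise non-adjacent neighbours. Consequently, to establish claw-freeness it is enough to verify that in $\Gamma_{CCC}(G)$ the neighbourhood of every vertex induces a complete subgraph, for then no three neighbours can be pairwise non-adjacent.

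Next I would argue that this follows immediately from the cluster (disjoint-union-of-cliques) decomposition. Fix any vertex $v$ of $\Gamma_{CCC}(G)$ and let $C$ be the connected component containing it. By Theorem \ref{thm1}, $C$ is a complete graph, so every neighbour of $v$ lies in $C$, and any two such neighbours are themselves adjacent. Hence $v$ cannot have three pairwise non-adjacent neighbours. Since $v$ was arbitrary, $\Gamma_{CCC}(G)$ is claw-free in all the listed cases.

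There is no genuine obstacle here: the corollary is a direct consequence of the component-wise clique decomposition proven in Theorem \ref{thm1}. The only point worth stating explicitly is that the isolated vertices (the $K_1$ summands) have empty neighbourhood and so satisfy the condition vacuously, leaving the argument unaffected.
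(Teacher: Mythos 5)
Your argument is correct and matches the paper's own proof, which likewise observes that each $\Gamma_{CCC}(G)$ in Theorem \ref{thm1} is a disjoint union of complete graphs and concludes claw-freeness from that. You simply spell out the neighbourhood-is-a-clique step that the paper leaves implicit.
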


\begin{theorem}
For any integer $n\geq 3$ which is not a power of $2$, we have that $\Gamma_{NCC}(D_{2n})$ is always a cograph and a chordal but  neither a split nor a threshold when $n$ is even but $\frac{n}{2}$ is odd.
\end{theorem}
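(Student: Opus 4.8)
The plan is to first pin down the adjacency rule of $\Gamma_{NCC}(D_{2n})$ from the subgroup structure of $D_{2n}$, and then read off the forbidden-subgraph properties from the resulting explicit description. Recall that the non-central conjugacy classes of $D_{2n}$ are the rotation classes $R_i = \{x^i, x^{-i}\}$ together with the reflection classes: a single class when $n$ is odd, and two classes $S_0 = \{x^{2j}y\}$, $S_1 = \{x^{2j+1}y\}$ when $n$ is even (the rotation $x^{n/2}$ being central in the even case). The key structural fact I would use is that $\langle x^i, x^j y\rangle$ and $\langle x^i y, x^j y\rangle$ are dihedral subgroups whose rotation part is generated by a suitable power of $x$, together with the observation that a dihedral group $D_{2d}$ is nilpotent precisely when $d$ is a power of $2$ (equivalently, when it is a $2$-group).

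From this I would determine the edges. Any two rotations generate a cyclic, hence nilpotent, subgroup, so the rotation classes form a clique. A rotation class $R_i$ is joined to a reflection class iff $\langle x^i, \text{reflection}\rangle = D_{2\,\mathrm{ord}(x^i)}$ is nilpotent, i.e. iff $\mathrm{ord}(x^i)$ is a power of $2$; equivalently $x^i$ lies in the Sylow $2$-subgroup of $\langle x\rangle$. Writing $n = 2^a m$ with $m$ odd, these are exactly the classes $R_i$ with $m \mid i$; call this set $B$ and let $A$ be the remaining rotation classes. Finally, when $n$ is even the two reflection classes are adjacent, since one can choose reflections whose product is $x^{m}$, of order $2^a$, so that they generate a $2$-group. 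When $n$ is odd, $\mathrm{ord}(x^i)$ is always odd, so the single reflection class is isolated.

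This yields a completely explicit graph, and the plan is to recognize it as a cograph and chordal graph by decomposing it into joins and disjoint unions of complete graphs. For $n$ odd one obtains $K_{(n-1)/2}\,\dot{\cup}\,K_1$. For $n$ even, every vertex of $B$ is adjacent to every other vertex, so $B$ is a set of universal vertices and $\Gamma_{NCC}(D_{2n}) = K_{|B|} \vee \big(K_{|A|}\,\dot{\cup}\,K_2\big)$, where the $K_2$ is the reflection edge. Both descriptions are built from complete graphs using disjoint union and join, hence are cographs; and since adjoining universal vertices to a disjoint union of cliques cannot create an induced cycle of length $\geq 4$, both are chordal.

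For the last claim I would specialize to $n$ even with $n/2$ odd, i.e. $a = 1$. Here the Sylow $2$-subgroup of $\langle x\rangle$ is $\{1, x^{n/2}\}$, whose only non-trivial element $x^{n/2}$ is central; hence $B = \varnothing$ and no rotation class meets a reflection class. The graph then collapses to $K_{n/2-1}\,\dot{\cup}\,K_2$, and since $n \geq 6$ forces $n/2 - 1 \geq 2$, an edge of $K_{n/2-1}$ together with the reflection edge induces a $2K_2$. As $2K_2$ is a forbidden subgraph of both split and threshold graphs, the graph is neither. I expect the main obstacle to be the careful bookkeeping in the second step — in particular verifying the reflection--reflection adjacency and correctly excluding the central rotation — since the split/threshold conclusion hinges entirely on whether $B$ is empty, which is exactly the distinction between $n/2$ odd and $n/2$ even.
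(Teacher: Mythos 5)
Your proposal is correct and follows the same overall strategy as the paper: determine the explicit structure of $\Gamma_{NCC}(D_{2n})$ from the subgroup lattice and then read off the forbidden-subgraph properties. However, your adjacency analysis is more careful than the paper's on one point, and in fact corrects it. For $n$ even with $n/2$ even, the paper tests only the single pair $\langle y, yx\rangle \cong D_{2n}$ and concludes that the two reflection classes are adjacent if and only if $n$ is a power of $2$, so that otherwise they appear as $2K_1$; but adjacency in $\Gamma_{NCC}$ is an existential condition over all representatives, and, exactly as you argue, choosing reflections whose product is $x^{m}$ (with $n = 2^a m$, $m$ odd) yields $\langle yx^{2i}, yx^{2j+1}\rangle \cong D_{2\cdot 2^a}$, a $2$-group. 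Hence the two reflection classes are adjacent for every even $n$ (e.g.\ in $D_{24}$ one has $\langle y, yx^3\rangle \cong D_8$), and your clean description $K_{|B|} \vee \bigl(K_{|A|}\,\dot{\cup}\,K_2\bigr)$ with $B$ the universal rotation classes is the correct one. This discrepancy does not affect the theorem as stated: both analyses agree that the graph is a cograph and chordal (yours via the join/union decomposition, which also gives chordality since a universal vertex cannot lie on an induced $C_k$ for $k \ge 4$), and they agree on the case $n/2$ odd, where $B = \varnothing$ and the graph collapses to $K_{n/2-1}\,\dot{\cup}\,K_2 \supseteq 2K_2$ for $n \ge 6$. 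One consequence worth noting: under your (correct) structure, $|A| \ge 2$ whenever $n$ is even and not a power of $2$, so an induced $2K_2$ in fact also occurs when $n/2$ is even; this is consistent with the theorem read as a one-directional claim, but shows the ``when $n/2$ is odd'' clause should not be read as an ``only when.''
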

\begin{proof}
The non-central conjugacy classes of $ D_{2n} $ for odd  $ n $ are given by
$$
[x^i]^{D_{2n}}, 1\leq i\leq \frac{n-1}{2},~~
[y]^{D_{2n}} = \{y, yx, \dots, yx^{\frac{n-1}{2}}\}.
$$
Since the subgroup $ \langle x^i, x^j \rangle $ is always nilpotent, the conjugacy classes $ [x^i]^{D_{2n}} $ (for $ 1\leq i\leq \frac{n-1}{2} $) form a complete graph $ K_{\frac{n-1}{2}} $ in $ \Gamma_{NCC} (D_{2n}) $. Additionally, the subgroup $ \langle x^i, yx^j \rangle $ is isomorphic to $ D_{\frac{2n}{\gcd(n,i)}} $, meaning that $ x^i $ and $ yx^j $ are not adjacent whenever $ \frac{n}{\gcd(n,i)} $ is not a power of 2. Consequently, for odd $ n $, the nilpotent conjugacy class graph is
$$
\Gamma_{NCC} (D_{2n}) = K_{\frac{(n-1)}{2}} \dot{\cup} K_1.
$$
For $ n $ even, the non-central conjugacy classes of $ D_{2n} $ are
$$
[x^i]^{D_{2n}},1\leq i\leq \frac{n}{2}-1,~~
[yx]^{D_{2n}} = \{yx, yx^3, \dots, yx^{\frac{n}{2}-1}\},~~
[y]^{D_{2n}} = \{y, yx^2, \dots, yx^{\frac{n}{2}-2}\}.
$$
Clearly, considering the conjugacy classes $ [x^i]^{D_{2n}} $ (for $ 1\leq i\leq \frac{n}{2}-1 $) as the vertex set, we obtain the complete graph $ K_{\frac{n}{2}-1} $.

If $ \frac{n}{2} $ is also even, then $ yx^{\frac{n}{2}} \in [y]^{D_{2n}} $. Since $ \langle yx, y \rangle \cong D_{2n} $, the vertices $ yx $ and $ y $ are adjacent in $ \Gamma_{NCC} (D_{2n}) $ if and only if $ n $ is a power of 2. Furthermore, $ x^i $ and $ y $ (or $ yx $) are adjacent if and only if the order of $ x^i $ is a power of 2. As a result, in this case, the vertices in $ K_{\frac{n}{2}-1} $ corresponding to elements of order a power of 2 are connected to the vertices in $ 2K_1 $.

If $ \frac{n}{2} $ is odd, then $ yx^{\frac{n}{2}} \in [yx]^{D_{2n}} $, so the subgroup $ \langle yx^i, yx^j \rangle $ has order $ 2^k $ for some $ k $. Thus, $ yx $ and $ y $ are adjacent in $ \Gamma_{NCC} (D_{2n}) $, and in this case,
$$
\Gamma_{NCC} (D_{2n}) = K_{\frac{n}{2}-1} \dot{\cup} K_2.
$$

From the above discussion, it follows that $ \Gamma_{NCC} (D_{2n}) $ is always a cograph and a chordal graph. However, when $ n $ is even and $ \frac{n}{2} $ is odd, $ \Gamma_{NCC} (D_{2n}) $ is neither a split graph nor a threshold graph.
\end{proof}
\begin{remark}
For $n \leq 2$, $\Gamma_{NCC}(D_{2n})$ is a complete graph, so it is always a cograph, a chordal, a split, and a threshold.
\end{remark}

\begin{theorem}
The structure of the nilpotent conjugacy class graph of $ T_{4n} $ is
$$
\Gamma_{NCC}(T_{4n}) = K_{n-1} \dot{\cup} 2K_1, \quad \text{where } n \neq 2^l k.
$$
Consequently, this graph is always a cograph, chordal, threshold, and split, except when $ n = 2^l k $ for $ 0 \leq k < (n-1) $ and $ l \in \mathbb{N} $.
\end{theorem}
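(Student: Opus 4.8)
The plan is to first pin down the vertex set and then read off all adjacencies from the rigid subgroup structure of $T_{4n}$. Writing $\langle x\rangle$ for the cyclic subgroup of order $2n$ and recalling $y^2 = x^n$ is the unique central involution, I would begin by computing the non-central conjugacy classes. Since $y^{-1}xy = x^{-1}$, conjugation sends $x^i \mapsto x^{\pm i}$, so the classes inside $\langle x\rangle$ are $\{x^i, x^{-i}\}$ for $1 \le i \le n-1$, giving $n-1$ vertices; conjugation by $x$ sends $yx^k \mapsto yx^{k-2}$, so the reflection-type elements split into exactly two classes $[y]$ and $[yx]$ according to the parity of $k$. This produces $n+1$ vertices, consistent with the claimed vertex set of $K_{n-1}\dot\cup 2K_1$.

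The core computation is the adjacency analysis, for which I would first record the identity $(yx^j)^2 = y^2 = x^n$, valid for every $j$. Any two powers of $x$ generate a cyclic, hence nilpotent, subgroup, so the $n-1$ classes $\{x^i,x^{-i}\}$ are pairwise adjacent and span a clique $K_{n-1}$. Next, for a reflection element I would show that $\langle x^i, yx^j\rangle = \langle x^{\gcd(i,n)}\rangle \cup \langle x^{\gcd(i,n)}\rangle\, yx^j$ is itself a dicyclic group isomorphic to $T_{4N}$ with $N = n/\gcd(i,n)$: it contains $x^{\gcd(i,n)}$ together with the central involution $x^n = (yx^j)^2$, and $yx^j$ inverts the cyclic part. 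Invoking the standard fact that a dicyclic group $T_{4N}$ is nilpotent precisely when it is a $2$-group, i.e. when $N$ is a power of $2$, each adjacency is decided by a purely arithmetic condition on $n$.

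From here the structure follows by casework on the $2$-adic valuation of $n$. A cyclic class $[x^i]$ is joined to a reflection class if and only if $n/\gcd(i,n)$ is a power of $2$; one then checks for which $n$ no such $i$ with $1 \le i \le n-1$ exists, so that the reflection vertices remain separated from $K_{n-1}$, and conversely isolates the values $n = 2^l k$ for which a bridging edge appears and destroys the clean disjoint union. The analogous question for the pair $[y],[yx]$ reduces, via $\langle yx^i, yx^j\rangle = \langle x^{\gcd(j-i,n)}, yx^i\rangle$, to the same power-of-$2$ criterion applied to $n/\gcd(j-i,n)$. Once the adjacency pattern is settled, the graph is a disjoint union of cliques, and the forbidden-subgraph conclusions are immediate: a disjoint union of complete graphs is always $P_4$-free and $C_m$-free for $m \ge 4$, hence a cograph and chordal, while it is $2K_2$-free—equivalently split and threshold—exactly when at most one component carries an edge, which is what fails in the exceptional range $n = 2^l k$.

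I expect the main obstacle to be the bookkeeping in this adjacency step, and in particular deciding exactly when the two reflection classes attach to the clique and to each other. This hinges on detecting a generalized quaternion ($2$-group) subgroup of the form $\langle x^{\gcd(\cdot,n)}, yx^j\rangle$, and the delicate point is converting ``$N = n/\gcd(\cdot,n)$ is a power of $2$'' into a clean condition on $n$ alone (the $n = 2^l k$ dichotomy), while verifying throughout that the distinguished involution $x^n$ lies in the generated cyclic subgroup, so that the span is genuinely dicyclic rather than something larger.
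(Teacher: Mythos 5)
Your overall route is the same as the paper's: list the non-central classes ($n-1$ classes $\{x^i,x^{-i}\}$ plus the two reflection classes $[y]$ and $[yx]$), observe that the cyclic classes span $K_{n-1}$, identify $\langle x^i, yx^j\rangle$ as a dicyclic group $T_{4N}$ with $N=n/\gcd(i,n)$, and decide adjacency by the criterion that a dicyclic group is nilpotent iff it is a $2$-group. The only real difference is that the paper outsources the subgroup identification to a citation, whereas you derive it; and, more importantly, you correctly quantify over \emph{all} pairs of class representatives, whereas the paper tests only the single pair $(b,ab)$, which generates all of $T_{4n}$, and concludes from this that $[b]\not\sim[ab]$ unless $n$ is a power of $2$. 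That step in the paper does not match the definition of adjacency in $\Gamma_{NCC}$, which requires only \emph{some} pair of representatives to generate a nilpotent subgroup.

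This is also where your proposal, as written, stops short: you reduce the $[y]$--$[yx]$ adjacency to the power-of-$2$ criterion for $n/\gcd(j-i,n)$ with $j-i$ odd, but you defer the resolution of that arithmetic to ``bookkeeping.'' If you carry it out, you will find that the two reflection classes are \emph{always} adjacent: taking $j-i$ equal to the odd part of $n$ gives $\langle yx^i, yx^j\rangle\cong\langle x^{n_{\mathrm{odd}}}, yx^i\rangle$, a generalized quaternion $2$-group (and simply $\langle y\rangle\cong C_4$ when $n$ is odd, since then $yx^n=y^{-1}$ lies in $[yx]$), which is nilpotent. So the graph always contains at least $K_{n-1}\,\dot\cup\, K_2$, consistent with the paper's own Theorem 3.1(b) for $\Gamma_{CCC}(T_{4n})$ with $n$ odd, and the claimed structure $K_{n-1}\,\dot\cup\,2K_1$ is never attained. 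In short: your framework is sound and in fact more careful than the paper's, but the proposal is incomplete precisely at the step that decides the theorem, and completing it honestly contradicts the stated conclusion rather than establishing it. You should finish the casework explicitly and flag the discrepancy rather than assume the target structure will emerge.
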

\begin{proof}
  The non-central conjugacy classes of $ T_{4n} $ consist of $ [a^k]^{T_{4n}} $ for $ 1 \leq k \leq (n-1) $, along with $ [b]^{T_{4n}} $ and $ [ab]^{T_{4n}} $. Since all elements in the conjugacy class $ [a^k] $ commute with one another, they form a complete subgraph in $ \Gamma_{NCC}(T_{4n}) $.
According to \cite{MR4050270}, the subgroups $ \langle a^k, b \rangle $ and $ \langle a^k, ab \rangle $ are isomorphic to $ T_{\frac{4n}{k}} $, which is not nilpotent unless $ \frac{n}{k} = 2^l $ for some $ l \in \mathbb{N} $. Furthermore, the subgroup $ \langle ab, b \rangle $ is isomorphic to $ T_{4n} $, which is not nilpotent unless $ n $ is power of $2$.
\end{proof}
\begin{theorem}
The nilpotent conjugacy class graph of the generalized dihedral group is always a cograph, chordal, threshold, and split graph, except when $|A| = 2^r$ for some $r \in \mathbb{N}$.
\end{theorem}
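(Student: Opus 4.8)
The plan is to follow the template of the dihedral and dicyclic arguments: determine the conjugacy classes of $G = \mathrm{Dih}(A) = A \rtimes \langle b\rangle$, convert the nilpotency adjacency condition into a statement about element orders, and then read the forbidden subgraphs off the resulting structure. Since $bab^{-1} = a^{-1}$, the elements split into rotations (the elements of $A$) and reflections (the coset $Ab$), every reflection being an involution. I would first record that the non-central rotation classes are the pairs $\{a, a^{-1}\}$ with $a^2 \neq 1$, and that the reflections fuse into exactly $[A:A^2] = |A[2]|$ classes, one per coset of the square subgroup $A^2$: conjugation by $c \in A$ sends $ab \mapsto c^2 ab$, while conjugation by $b$ sends $ab \mapsto a^{-1}b$, so the class of $ab$ is the coset $aA^2 \cdot b$.

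The engine of the adjacency analysis is that every two-generator subgroup meeting the reflections is dihedral: $\langle a, cb\rangle \cong \mathrm{Dih}(\langle a\rangle)$ and $\langle c_1 b, c_2 b\rangle \cong \mathrm{Dih}(\langle c_1 c_2^{-1}\rangle)$, and a dihedral group is nilpotent exactly when its cyclic part is a $2$-group. Consequently (i) any two rotation classes are adjacent, as $\langle a, a'\rangle \le A$ is abelian, so the rotation classes form a clique; (ii) a rotation class $\{a, a^{-1}\}$ is joined to a reflection class iff $\mathrm{ord}(a)$ is a power of $2$; and (iii) two reflection classes $c_1 A^2$ and $c_2 A^2$ are adjacent iff the coset $c_1 c_2^{-1} A^2$ contains an element of $2$-power order. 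To settle (iii) I would decompose $A = A_2 \times A_{2'}$ into its Sylow $2$-subgroup and odd part, observe $A^2 = A_2^2 \times A_{2'}$, and check that each such coset meets the set of $2$-elements $A_2 \times \{1\}$, so that the reflection classes also form a clique.

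Once this structure is in place the forbidden-subgraph conclusions should drop out by partitioning the vertices into the reflection clique $R$, the $2$-power-order rotation classes, and the remaining non-$2$-power rotation classes, and noting that the only non-adjacent pairs are reflection/non-$2$-power-rotation pairs. Any induced $P_4$ or induced cycle of length at least $4$ would then have to sit inside a disjoint union of two cliques, which is impossible; this yields the cograph and chordal assertions uniformly in $A$. For the split and threshold properties I would use that chordality already excludes induced $C_4$ and $C_5$, so the only remaining obstruction is an induced $2K_2$, whose existence I would reduce to the presence of at least two reflection classes together with at least two non-$2$-power rotation classes, i.e. to the arithmetic of $|A|$ relative to its $2$-part.

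The step I expect to be the main obstacle is clause (iii): in contrast with the cyclic (dihedral) case, the reflection classes are now indexed by cosets of $A^2$ rather than by a single parity, and deciding adjacency requires controlling when an entire coset $c_1 c_2^{-1} A^2$ avoids all elements of $2$-power order. Getting this coset analysis exactly right is what pins down the precise threshold separating the generic well-behaved graphs from the degenerate case $|A| = 2^r$, where $G$ is a $2$-group and $\Gamma_{NCC}(G)$ collapses to a complete graph. By contrast, the cograph and chordal parts are robust and follow directly from the two-clique picture regardless of the fine structure of $A$.
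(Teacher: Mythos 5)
Your proposal is correct and follows the same basic strategy as the paper's proof: list the conjugacy classes of $A\rtimes\langle b\rangle$, observe that every two-generated subgroup involving a reflection is dihedral over a cyclic group, and use that such a group is nilpotent exactly when that cyclic group is a $2$-group. But you carry the analysis considerably further, and your extra care matters. The paper's proof writes the reflections as a single class $[b]^G$ indexed by $A/A_1$ and only examines subgroups of the form $\langle a, x_ibx_i^{-1}\rangle$; it never addresses the fact that when $|A|$ is even the reflections split into $[A:A^2]=|A[2]|$ distinct classes, nor the adjacency between two such classes. Your clause (iii) — reducing reflection--reflection adjacency to whether the coset $c_1c_2^{-1}A^2$ meets the set of $2$-elements, and settling it via $A=A_2\times A_{2'}$, $A^2=A_2^2\times A_{2'}$ — is exactly the missing step, and it correctly shows the reflection classes form a clique. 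From there your two-clique-plus-partial-join picture gives clean proofs of the cograph and chordal claims (any vertex adjacent to non-$2$-power rotation classes and to reflections simultaneously is adjacent to everything, so an induced $P_4$ or $C_{\ge 4}$ would have to live in the disjoint union of the two cliques $K_2\,\dot\cup\,R$, which is impossible).

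One point you should press further, because it is not a defect of your argument but of the statement you were asked to prove: your criterion for an induced $2K_2$ — at least two reflection classes and at least two non-$2$-power rotation classes — is satisfied precisely when $|A|$ is even and not a power of $2$ (evenness gives $[A:A^2]\ge 2$, and an odd prime divisor $p$ of $|A|$ together with evenness gives classes of elements of order $p$ and of order $2p$). This is the opposite of the theorem's exception clause: when $|A|=2^r$ the group is nilpotent, the graph is complete, and all four properties hold trivially, whereas for $A\cong C_6$ (so $G\cong D_{12}$) the graph is $K_2\,\dot\cup\,K_2$, which is neither split nor threshold — consistent with the paper's own Theorem on $\Gamma_{NCC}(D_{2n})$ for $n\equiv 2\pmod 4$ but not with the generalized dihedral statement as written. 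So your analysis is the correct one; the conclusion should read: always a cograph and chordal, and split and threshold if and only if $|A|$ is odd or a power of $2$.
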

\begin{proof}
Let $ A_1 $ denote the subgroup consisting of all elements of order $ 2 $ in $ A $. The non-central conjugacy classes of $ A \rtimes C_2 $ are given by
$$
[b]^G = \{ x_i b x_i^{-1} \mid x_i A_1 \in A/A_1 \} \quad \text{and} \quad [a]^G = \{ a, a^{-1} \mid a \in A \}, \quad 1 \leq i \leq n.
$$
Since $ A $ is abelian, considering the conjugacy classes $ [a]^G $ for $ 1 \leq i \leq n $ as the vertex set forms the complete graph $ K_n $.

For the subgroup $ H= \langle a, x_i b x_i^{-1} \rangle $, since $ x_i b x_i^{-1} $ has order $ 2 $, if $ a $ has order $ m $, then the conjugation relation
$$
(x_i b x_i^{-1})^{-1} a (x_i b x_i^{-1}) = a^{-1}
$$
holds. Consequently, $ H $ is isomorphic to $ \langle a \rangle \rtimes \langle x_i b x_i^{-1} \rangle $. The subgroup $ H $ is nilpotent only when $ m $ is a power of $ 2 $.

Thus, based on this structure, the forbidden subgraphs always form a cograph, a chordal graph, a split graph, and a threshold graph.
\end{proof}
\begin{remark}The commuting conjugacy class graph of a generalized dihedral group is always a cograph, a chordal graph, a threshold graph, and a split graph. This follows from the observation that its structure is given by
\begin{equation*}
    \Gamma_{CCC}(A \rtimes \langle b \rangle) = K_n \dot{\cup} 2K_1.
\end{equation*}
\end{remark}
\begin{corollary}
The nilpotent conjugacy class graph of the dihedral group $ D_{2n} $, the generalized dihedral group, and the dicyclic group $ T_{4n} $ never contains a claw as an induced subgraph.
\end{corollary}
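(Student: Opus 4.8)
The plan is to read the claw-free property directly off the explicit descriptions of $\Gamma_{NCC}(D_{2n})$, $\Gamma_{NCC}(T_{4n})$, and the nilpotent conjugacy class graph of the generalized dihedral group established in the three preceding theorems. Recall that a graph is claw-free exactly when no vertex $v$ has three pairwise non-adjacent neighbours, i.e. when the induced subgraph on $N(v)$ has independence number at most $2$. So the whole corollary reduces to bounding the independence number of every neighbourhood by $2$, and I would organise the proof around the two shapes the structure theorems produce: a disjoint union of complete graphs, or a single large clique with a few reflection-type vertices attached.

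In the cases where the graph is a disjoint union of cliques — $\Gamma_{NCC}(D_{2n}) = K_{(n-1)/2}\,\dot{\cup}\,K_1$ for $n$ odd, $K_{\frac{n}{2}-1}\,\dot{\cup}\,K_2$ when $\frac{n}{2}$ is odd, $\Gamma_{NCC}(T_{4n}) = K_{n-1}\,\dot{\cup}\,2K_1$, and the analogous decomposition for the generalized dihedral group away from the exceptional orders — the claim is immediate: in a disjoint union of cliques every neighbourhood is itself a clique, hence has independence number $1$, so no claw can arise. The only part that demands real attention is the connected configuration, which occurs for $D_{2n}$ with $n$ even and $\frac{n}{2}$ even (and the corresponding attached configuration for the generalized dihedral group), where the clique $K_{\frac{n}{2}-1}$ of rotation classes is linked to the reflection classes precisely through those rotation classes whose order is a power of $2$.

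The hard part is therefore to exclude a claw centred at a rotation class $[x^i]^{G}$ of $2$-power order, since such a vertex is adjacent to every other rotation class (these form a clique) \emph{and} to both reflection classes. The key step I would carry out is to verify that the two reflection classes are themselves adjacent in $\Gamma_{NCC}(D_{2n})$: choosing representatives $y$ and $yx^{m}$, where $m$ is the odd part of $n$, the subgroup $\langle y, yx^{m}\rangle = \langle x^{m}, y\rangle$ is dihedral of order $2(n/m)$, and since $n/m$ is the $2$-part of $n$ this is a nilpotent $2$-group, so $[y]^{G} \sim [yx]^{G}$. Once this single edge is in place, any independent set inside $N\!\left([x^i]^{G}\right)$ can contain at most one rotation class (the rotations form a clique) and at most one reflection class (the two reflections are now joined), so its size is at most $2$; the reflection-type vertices themselves have clique neighbourhoods and are handled the same way, and the identical argument transfers to the generalized dihedral group. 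Hence no neighbourhood has independence number exceeding $2$, and none of the three graphs contains an induced claw.
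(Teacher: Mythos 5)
Your proposal is correct, and it is in fact more careful than the paper's own argument. The paper disposes of this corollary in one line by asserting that the graphs in question ``consist solely of complete graphs or disjoint unions of complete graphs''; but that description fails in exactly the case you single out as the hard one, namely $D_{2n}$ with $4 \mid n$ and $n$ not a power of $2$ (and the analogous attached configurations for $T_{4n}$ and the generalized dihedral group), where the paper's own structure theorem describes a \emph{connected} graph: the rotation clique with the two reflection classes hanging off its $2$-power-order vertices. Worse, that structure theorem claims $[y]$ and $[yx]$ are non-adjacent unless $n$ is a power of $2$, having tested only the single pair of representatives $y$ and $yx$; under that literal description the corollary would be false --- for $n=12$ the vertex $[x^3]$ would be the centre of a claw with leaves $[x]$, $[y]$, $[yx]$. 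Your observation that adjacency in $\Gamma_{NCC}$ is a conjugacy-class condition, so that the representatives $y$ and $yx^{m}$ (with $m$ the odd part of $n$) generate the dihedral $2$-group $\langle x^{m}, y\rangle$ and hence force the edge $[y]\sim[yx]$, is precisely the ingredient the paper is missing; combined with your bound of $2$ on the independence number of every neighbourhood, it yields a complete proof, and the same representative-choosing trick handles $T_{4n}$ (via $\langle x^{m}, y\rangle$, a generalized quaternion $2$-group). The one step I would ask you to spell out is the transfer to the generalized dihedral group $A \rtimes \langle b\rangle$: there may be several reflection classes (the cosets of $A^{2}$ in $A$), and you need to check that every such coset contains an element of $2$-power order --- which follows from the decomposition of $A$ into its Sylow $2$-subgroup and odd part --- so that the reflection classes form a clique; after that your neighbourhood argument applies verbatim. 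In short, the paper's proof buys brevity by resting on an inaccurate structural description, while your route is longer but sound.
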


\section{Conclusions and future work}\label{sec.6}
In this paper, we have analyzed the forbidden subgraphs in commuting conjugacy class graphs of symmetric and alternating groups to understand their structural properties. While many intriguing questions remain unexplored.  That leads to the following natural question:

\begin{question}
 For what values of $n$, the nilpotent and solvable conjugacy class graph of the $\mathrm{Sym}(n)$ and the $\mathrm{Alt}(n)$ is cograph, chordal, split, threshold and claw-free?
\end{question}
Moreover, we have studied the forbidden induced subgraphs of all the sporadic groups in commuting conjugacy class graph. A natural question arises in this context.

\begin{question}
Analyze the forbidden subgraphs of nilpotent and solvable conjugacy class graph of the sporadic greoups.
\end{question}

For the finite simple grpops of Lie type the question is totally open. Since for any solvable groups $G$, $\Gamma_{SCC}(G)$ is always complete, so it is always a cograph, a chordal, a split, as well as threshold. Also, it is claw-free. Hence, the natural question is the following.
\begin{question}
  For any solvable groups $G$, what are the conditions on $G$, so that both of $\Gamma_{CCC}(G)$ and $\Gamma_{NCC}(G)$ is a cograph, a chordal, a split, a threshold, and claw-free.
\end{question}

\section*{Acknowledgments}

Dr. Papi Ray expresses gratitude to the Department of Mathematics and Statistics, IIT Kanpur, India for providing funding (in the form of post-doctoral fellowship) during this work.

\section*{Conflict of interest}

The authors declare that there are no conflicts of interest.


\bibliographystyle{plain}
\bibliography{Biblography}

@book {will,
    AUTHOR = {Wilson, Robert A.},
     TITLE = {The finite simple groups},
    SERIES = {Graduate Texts in Mathematics},
    VOLUME = {251},
 PUBLISHER = {Springer-Verlag London, Ltd., London},
      YEAR = {2009},
     PAGES = {xvi+298},
      ISBN = {978-1-84800-987-5},
   MRCLASS = {20D05},
  MRNUMBER = {2562037},
MRREVIEWER = {Gernot\ Stroth},
       DOI = {10.1007/978-1-84800-988-2},
       URL = {https://doi.org/10.1007/978-1-84800-988-2},
}

@article {MR5080938,
    AUTHOR = {Arvind, V. and Cameron, Peter J. and Ma, Xuanlong and Maslova,
              Natalia V.},
     TITLE = {Aspects of the commuting graph},
   JOURNAL = {J. Algebra},
  FJOURNAL = {Journal of Algebra},
    VOLUME = {703},
      YEAR = {2026},
     PAGES = {4--64},
      ISSN = {0021-8693,1090-266X},
   MRCLASS = {05C25 (05C85 20-08 20D60)},
  MRNUMBER = {5080938},
       DOI = {10.1016/j.jalgebra.2025.07.020},
       URL = {https://doi.org/10.1016/j.jalgebra.2025.07.020},
}

@article {MR3767278,
    AUTHOR = {Mohammadian, A. and Erfanian, A.},
     TITLE = {On the nilpotent conjugacy class graph of groups},
   JOURNAL = {Note Mat.},
  FJOURNAL = {Note di Matematica},
    VOLUME = {37},
      YEAR = {2017},
    NUMBER = {2},
     PAGES = {77--89},
      ISSN = {1123-2536,1590-0932},
   MRCLASS = {05E18 (20E45)},
  MRNUMBER = {3767278},
MRREVIEWER = {Rajat\ Kanti\ Nath},
       DOI = {10.1285/i15900932v37n2p77},
       URL = {https://doi.org/10.1285/i15900932v37n2p77},
}

@article {MR4050270,
    AUTHOR = {Salahshour, Mohammad Ali and Ashrafi, Ali Reza},
     TITLE = {Commuting conjugacy class graph of finite {CA}-groups},
   JOURNAL = {Khayyam J. Math.},
  FJOURNAL = {Khayyam Journal of Mathematics},
    VOLUME = {6},
      YEAR = {2020},
    NUMBER = {1},
     PAGES = {108--118},
      ISSN = {2423-4788},
   MRCLASS = {20C15 (05E18 20D15)},
  MRNUMBER = {4050270},
MRREVIEWER = {Ashraf\ Daneshkhah},
       DOI = {10.22034/kjm.2019.97177},
       URL = {https://doi.org/10.22034/kjm.2019.97177},
}

@misc{overleaf_hyperlinks,
  author = {Overleaf},
  title  = {Hyperlinks in LaTeX},
  url    = {https://brauer.maths.qmul.ac.uk/Atlas/v3/},
  year   = {2026}
}

@article {MR2561849,
    AUTHOR = {Herzog, Marcel and Longobardi, Patrizia and Maj, Mercede},
     TITLE = {On a commuting graph on conjugacy classes of groups},
   JOURNAL = {Comm. Algebra},
  FJOURNAL = {Communications in Algebra},
    VOLUME = {37},
      YEAR = {2009},
    NUMBER = {10},
     PAGES = {3369--3387},
      ISSN = {0092-7872,1532-4125},
   MRCLASS = {20E45 (05C25 20D10 20F16 20F50)},
  MRNUMBER = {2561849},
MRREVIEWER = {Rachel\ Quinlan},
       DOI = {10.1080/00927870802502779},
       URL = {https://doi.org/10.1080/00927870802502779},
}

@article {MR4746158,
    AUTHOR = {Ma, Xuanlong and Zahirovi\'{c}, Samir and Lv, Yubo and She,
              Yanhong},
     TITLE = {Forbidden subgraphs in enhanced power graphs of finite groups},
   JOURNAL = {Rev. R. Acad. Cienc. Exactas F\'{\i}s. Nat. Ser. A Mat.
              RACSAM},
  FJOURNAL = {Revista de la Real Academia de Ciencias Exactas, F\'{\i}sicas
              y Naturales. Serie A. Matematicas. RACSAM},
    VOLUME = {118},
      YEAR = {2024},
    NUMBER = {3},
     PAGES = {Paper No. 110},
      ISSN = {1578-7303,1579-1505},
   MRCLASS = {05C25 (05C38)},
  MRNUMBER = {4746158},
       DOI = {10.1007/s13398-024-01611-1},
       URL = {https://doi.org/10.1007/s13398-024-01611-1},
}

@article {MR4511156,
    AUTHOR = {Lucchini, Andrea and Nemmi, Daniele},
     TITLE = {Forbidden subgraphs in generating graphs of finite groups},
   JOURNAL = {Algebr. Comb.},
  FJOURNAL = {Algebraic Combinatorics},
    VOLUME = {5},
      YEAR = {2022},
    NUMBER = {5},
     PAGES = {925--946},
      ISSN = {2589-5486},
   MRCLASS = {05C25 (05C17 20D60 20F05)},
  MRNUMBER = {4511156},
MRREVIEWER = {Mark\ L.\ Lewis},
       DOI = {10.5802/alco.229},
       URL = {https://doi.org/10.5802/alco.229},
}

@article {MR4236739,
    AUTHOR = {Li, Huani and Fu, Ruiqin and Ma, Xuanlong},
     TITLE = {Forbidden subgraphs in reduced power graphs of finite groups},
   JOURNAL = {AIMS Math.},
  FJOURNAL = {AIMS Mathematics},
    VOLUME = {6},
      YEAR = {2021},
    NUMBER = {5},
     PAGES = {5410--5420},
      ISSN = {2473-6988},
   MRCLASS = {05C25 (05C17)},
  MRNUMBER = {4236739},
       DOI = {10.3934/math.2021319},
       URL = {https://doi.org/10.3934/math.2021319},
}

@article {MR4335775,
    AUTHOR = {Cameron, Peter J. and Manna, Pallabi and Mehatari, Ranjit},
     TITLE = {On finite groups whose power graph is a cograph},
   JOURNAL = {J. Algebra},
  FJOURNAL = {Journal of Algebra},
    VOLUME = {591},
      YEAR = {2022},
     PAGES = {59--74},
      ISSN = {0021-8693,1090-266X},
   MRCLASS = {20D60 (05C25 20D15)},
  MRNUMBER = {4335775},
MRREVIEWER = {Dapeng\ Yu},
       DOI = {10.1016/j.jalgebra.2021.09.034},
       URL = {https://doi.org/10.1016/j.jalgebra.2021.09.034},
}

@article {MR4281681,
    AUTHOR = {Manna, Pallabi and Cameron, Peter J. and Mehatari, Ranjit},
     TITLE = {Forbidden subgraphs of power graphs},
   JOURNAL = {Electron. J. Combin.},
  FJOURNAL = {Electronic Journal of Combinatorics},
    VOLUME = {28},
      YEAR = {2021},
    NUMBER = {3},
     PAGES = {Paper No. 3.4, 14},
      ISSN = {1077-8926},
   MRCLASS = {05C25 (05C75)},
  MRNUMBER = {4281681},
MRREVIEWER = {Zilin\ Jiang},
       DOI = {10.37236/9961},
       URL = {https://doi.org/10.37236/9961},
}

@article {MR4756899,
    AUTHOR = {Cameron, Peter J. and Jannat, Firdous Ee and Nath, Rajat Kanti
              and Sharafdini, Reza},
     TITLE = {A survey on conjugacy class graphs of groups},
   JOURNAL = {Expo. Math.},
  FJOURNAL = {Expositiones Mathematicae},
    VOLUME = {42},
      YEAR = {2024},
    NUMBER = {4},
     PAGES = {Paper No. 125585, 26},
      ISSN = {0723-0869,1878-0792},
   MRCLASS = {20D60 (05C25 20E45)},
  MRNUMBER = {4756899},
       DOI = {10.1016/j.exmath.2024.125585},
       URL = {https://doi.org/10.1016/j.exmath.2024.125585},
}

@article {MR619603,
    AUTHOR = {Corneil, D. G. and Lerchs, H. and Burlingham, L. Stewart},
     TITLE = {Complement reducible graphs},
   JOURNAL = {Discrete Appl. Math.},
  FJOURNAL = {Discrete Applied Mathematics. The Journal of Combinatorial
              Algorithms, Informatics and Computational Sciences},
    VOLUME = {3},
      YEAR = {1981},
    NUMBER = {3},
     PAGES = {163--174},
      ISSN = {0166-218X,1872-6771},
   MRCLASS = {05C75 (68E10)},
  MRNUMBER = {619603},
       DOI = {10.1016/0166-218X(81)90013-5},
       URL = {https://doi.org/10.1016/0166-218X(81)90013-5},
}

@incollection {MR232694,
    AUTHOR = {Berge, C.},
     TITLE = {Some classes of perfect graphs},
 BOOKTITLE = {Graph {T}heory and {T}heoretical {P}hysics},
     PAGES = {155--165},
 PUBLISHER = {Academic Press, London-New York},
      YEAR = {1967},
   MRCLASS = {05.40},
  MRNUMBER = {232694},
MRREVIEWER = {H.\ V.\ Kronk},
}

@inproceedings {MR505860,
    AUTHOR = {Foldes, St\'ephane and Hammer, Peter L.},
     TITLE = {Split graphs},
 BOOKTITLE = {Proceedings of the {E}ighth {S}outheastern {C}onference on
              {C}ombinatorics, {G}raph {T}heory and {C}omputing ({L}ouisiana
              {S}tate {U}niv., {B}aton {R}ouge, {L}a., 1977)},
    SERIES = {Congress. Numer.},
    VOLUME = {No. XIX},
     PAGES = {311--315},
 PUBLISHER = {Utilitas Math., Winnipeg, MB},
      YEAR = {1977},
      ISBN = {0-919628-19-2},
   MRCLASS = {05C99},
  MRNUMBER = {505860},
}

@book {MR1417258,
    AUTHOR = {Mahadev, N. V. R. and Peled, U. N.},
     TITLE = {Threshold graphs and related topics},
    SERIES = {Annals of Discrete Mathematics},
    VOLUME = {56},
 PUBLISHER = {North-Holland Publishing Co., Amsterdam},
      YEAR = {1995},
     PAGES = {xiv+543},
      ISBN = {0-444-89287-7},
   MRCLASS = {05-02 (05C30 05C75)},
  MRNUMBER = {1417258},
MRREVIEWER = {Arkadzi\ Charniak},
}

@article {MR89205,
    AUTHOR = {Higman, Graham},
     TITLE = {Finite groups in which every element has prime power order},
   JOURNAL = {J. London Math. Soc.},
  FJOURNAL = {The Journal of the London Mathematical Society},
    VOLUME = {32},
      YEAR = {1957},
     PAGES = {335--342},
      ISSN = {0024-6107,1469-7750},
   MRCLASS = {20.0X},
  MRNUMBER = {89205},
MRREVIEWER = {H.\ A.\ Thurston},
       DOI = {10.1112/jlms/s1-32.3.335},
       URL = {https://doi.org/10.1112/jlms/s1-32.3.335},
}

@book {MR1369573,
    AUTHOR = {Alperin, J. L. and Bell, Rowen B.},
     TITLE = {Groups and representations},
    SERIES = {Graduate Texts in Mathematics},
    VOLUME = {162},
 PUBLISHER = {Springer-Verlag, New York},
      YEAR = {1995},
     PAGES = {x+194},
      ISBN = {0-387-94525-3},
   MRCLASS = {20-01},
  MRNUMBER = {1369573},
MRREVIEWER = {David\ Chillag},
       DOI = {10.1007/978-1-4612-0799-3},
       URL = {https://doi.org/10.1007/978-1-4612-0799-3},
}

@book {MR3643210,
    AUTHOR = {Wadsworth, A. R.},
     TITLE = {Problems in abstract algebra},
    SERIES = {Student Mathematical Library},
    VOLUME = {82},
 PUBLISHER = {American Mathematical Society, Providence, RI},
      YEAR = {2017},
     PAGES = {viii+277},
      ISBN = {978-1-4704-3583-7},
   MRCLASS = {00A07 (12F10 13-01 20-01)},
  MRNUMBER = {3643210},
       DOI = {10.1090/stml/082},
       URL = {https://doi.org/10.1090/stml/082},
}

@book {MR2866265,
    AUTHOR = {Roman, Steven},
     TITLE = {Fundamentals of group theory},
      NOTE = {An advanced approach},
 PUBLISHER = {Birkh\"auser/Springer, New York},
      YEAR = {2012},
     PAGES = {xii+380},
      ISBN = {978-0-8176-8300-9},
   MRCLASS = {20-01 (20Dxx)},
  MRNUMBER = {2866265},
MRREVIEWER = {Josu\ Sangroniz},
       DOI = {10.1007/978-0-8176-8301-6},
       URL = {https://doi.org/10.1007/978-0-8176-8301-6},
}

@article {MR4835924,
    AUTHOR = {Manna, Pallabi and Mandal, Santanu and Lucchini, Andrea},
     TITLE = {On finite groups whose power graph is claw-free},
   JOURNAL = {Discrete Math.},
  FJOURNAL = {Discrete Mathematics},
    VOLUME = {348},
      YEAR = {2025},
    NUMBER = {4},
     PAGES = {Paper No. 114348, 10},
      ISSN = {0012-365X,1872-681X},
   MRCLASS = {05C25 (05C75)},
  MRNUMBER = {4835924},
       DOI = {10.1016/j.disc.2024.114348},
       URL = {https://doi.org/10.1016/j.disc.2024.114348},
}

@article {MR4863726,
    AUTHOR = {Li, Huani and Chen, Jin and Lin, Shixun},
     TITLE = {Forbidden subgraphs of {TI}-power graphs of finite groups},
   JOURNAL = {Open Math.},
  FJOURNAL = {Open Mathematics},
    VOLUME = {23},
      YEAR = {2025},
    NUMBER = {1},
     PAGES = {Paper No. 20240099},
      ISSN = {2391-5455},
   MRCLASS = {05C25},
  MRNUMBER = {4863726},
       DOI = {10.1515/math-2024-0099},
       URL = {https://doi.org/10.1515/math-2024-0099},
}

@article {MR3188846,
    AUTHOR = {Doostabadi, A. and Erfanian, A. and Farrokhi D. G., M.},
     TITLE = {On power graphs of finite groups with forbidden induced
              subgraphs},
   JOURNAL = {Indag. Math. (N.S.)},
  FJOURNAL = {Koninklijke Nederlandse Akademie van Wetenschappen.
              Indagationes Mathematicae. New Series},
    VOLUME = {25},
      YEAR = {2014},
    NUMBER = {3},
     PAGES = {525--533},
      ISSN = {0019-3577,1872-6100},
   MRCLASS = {05C25 (05C75)},
  MRNUMBER = {3188846},
MRREVIEWER = {Guillermo\ Pineda-Villavicencio},
       DOI = {10.1016/j.indag.2014.01.003},
       URL = {https://doi.org/10.1016/j.indag.2014.01.003},
}

@article {MR4296337,
    AUTHOR = {Kumar, Jitender and Dalal, Sandeep and Baghel, Vedant},
     TITLE = {On the commuting graph of semidihedral group},
   JOURNAL = {Bull. Malays. Math. Sci. Soc.},
  FJOURNAL = {Bulletin of the Malaysian Mathematical Sciences Society},
    VOLUME = {44},
      YEAR = {2021},
    NUMBER = {5},
     PAGES = {3319--3344},
      ISSN = {0126-6705,2180-4206},
   MRCLASS = {05C25 (05C31 05C50)},
  MRNUMBER = {4296337},
MRREVIEWER = {Annamalai\ Tamilselvi},
       DOI = {10.1007/s40840-021-01111-0},
       URL = {https://doi.org/10.1007/s40840-021-01111-0},
}

@article {MR3090056,
    AUTHOR = {Morgan, G. L. and Parker, C. W.},
     TITLE = {The diameter of the commuting graph of a finite group with
              trivial centre},
   JOURNAL = {J. Algebra},
  FJOURNAL = {Journal of Algebra},
    VOLUME = {393},
      YEAR = {2013},
     PAGES = {41--59},
      ISSN = {0021-8693,1090-266X},
   MRCLASS = {05C25 (20D60)},
  MRNUMBER = {3090056},
MRREVIEWER = {Jamshid\ Moori},
       DOI = {10.1016/j.jalgebra.2013.06.031},
       URL = {https://doi.org/10.1016/j.jalgebra.2013.06.031},
}

@article {MR3395704,
    AUTHOR = {Das, Ashish Kumar and Nongsiang, Deiborlang},
     TITLE = {On the genus of the nilpotent graphs of finite groups},
   JOURNAL = {Comm. Algebra},
  FJOURNAL = {Communications in Algebra},
    VOLUME = {43},
      YEAR = {2015},
    NUMBER = {12},
     PAGES = {5282--5290},
      ISSN = {0092-7872,1532-4125},
   MRCLASS = {20D60 (05C25)},
  MRNUMBER = {3395704},
MRREVIEWER = {Ali\ Iranmanesh},
       DOI = {10.1080/00927872.2014.974256},
       URL = {https://doi.org/10.1080/00927872.2014.974256},
}

@article {MR4730001,
    AUTHOR = {Ma, Xuanlong and Cameron, Peter J.},
     TITLE = {Finite groups whose commuting graph is split},
   JOURNAL = {Tr. Inst. Mat. Mekh.},
  FJOURNAL = {Trudy Instituta Matematiki i Mekhaniki},
    VOLUME = {30},
      YEAR = {2024},
    NUMBER = {1},
     PAGES = {280--283},
      ISSN = {0134-4889,2658-4786},
   MRCLASS = {20D60 (05C25)},
  MRNUMBER = {4730001},
MRREVIEWER = {Deiborlang\ Nongsiang},
}

@article {MR4554555,
    AUTHOR = {Zhai, Liangliang and Ma, Xuanlong and Shao, Yong and Zhong,
              Guo},
     TITLE = {Metric and strong metric dimension in commuting graphs of
              finite groups},
   JOURNAL = {Comm. Algebra},
  FJOURNAL = {Communications in Algebra},
    VOLUME = {51},
      YEAR = {2023},
    NUMBER = {3},
     PAGES = {1000--1010},
      ISSN = {0092-7872,1532-4125},
   MRCLASS = {05C12 (05C25 20D60)},
  MRNUMBER = {4554555},
MRREVIEWER = {Ioan\ Tomescu},
       DOI = {10.1080/00927872.2022.2118761},
       URL = {https://doi.org/10.1080/00927872.2022.2118761},
}

@article {MR4115328,
    AUTHOR = {Hart, Sarah and Sbeiti Clarke, Amal},
     TITLE = {Commuting involution graphs in classical affine {W}eyl groups},
   JOURNAL = {Comm. Algebra},
  FJOURNAL = {Communications in Algebra},
    VOLUME = {48},
      YEAR = {2020},
    NUMBER = {7},
     PAGES = {2941--2957},
      ISSN = {0092-7872,1532-4125},
   MRCLASS = {20F55 (05E18)},
  MRNUMBER = {4115328},
MRREVIEWER = {Jingyin\ Huang},
       DOI = {10.1080/00927872.2020.1726365},
       URL = {https://doi.org/10.1080/00927872.2020.1726365},
}

@article {MR3092687,
    AUTHOR = {Giudici, Michael and Parker, Chris},
     TITLE = {There is no upper bound for the diameter of the commuting
              graph of a finite group},
   JOURNAL = {J. Combin. Theory Ser. A},
  FJOURNAL = {Journal of Combinatorial Theory. Series A},
    VOLUME = {120},
      YEAR = {2013},
    NUMBER = {7},
     PAGES = {1600--1603},
      ISSN = {0097-3165,1096-0899},
   MRCLASS = {05C25 (05C12)},
  MRNUMBER = {3092687},
MRREVIEWER = {Elena\ V.\ Konstantinova},
       DOI = {10.1016/j.jcta.2013.05.008},
       URL = {https://doi.org/10.1016/j.jcta.2013.05.008},
}

@article {MR4698474,
    AUTHOR = {Rezaei, Mehdi and Foruzanfar, Zeinab},
     TITLE = {The commuting conjugacy class graphs of finite groups with a
              given property},
   JOURNAL = {Georgian Math. J.},
  FJOURNAL = {Georgian Mathematical Journal},
    VOLUME = {31},
      YEAR = {2024},
    NUMBER = {1},
     PAGES = {121--127},
      ISSN = {1072-947X,1572-9176},
   MRCLASS = {20E34 (05C25 20E45)},
  MRNUMBER = {4698474},
MRREVIEWER = {Maria\ De Falco},
       DOI = {10.1515/gmj-2023-2069},
       URL = {https://doi.org/10.1515/gmj-2023-2069},
}

@article {MR4532813,
    AUTHOR = {Saeidi, Amin},
     TITLE = {Finite groups whose commuting conjugacy class graphs have
              isolated vertices},
   JOURNAL = {Comm. Algebra},
  FJOURNAL = {Communications in Algebra},
    VOLUME = {51},
      YEAR = {2023},
    NUMBER = {2},
     PAGES = {648--656},
      ISSN = {0092-7872,1532-4125},
   MRCLASS = {20D60 (20D05 20E45)},
  MRNUMBER = {4532813},
MRREVIEWER = {Hong\ Yi\ Huang},
       DOI = {10.1080/00927872.2022.2107213},
       URL = {https://doi.org/10.1080/00927872.2022.2107213},
}

@article {MR3567881,
    AUTHOR = {Mohammadian, Abbas and Erfanian, Ahmad and Farrokhi D. G.,
              Mohammad and Wilkens, Bettina},
     TITLE = {Triangle-free commuting conjugacy class graphs},
   JOURNAL = {J. Group Theory},
  FJOURNAL = {Journal of Group Theory},
    VOLUME = {19},
      YEAR = {2016},
    NUMBER = {6},
     PAGES = {1049--1061},
      ISSN = {1433-5883,1435-4446},
   MRCLASS = {20D60 (20E45)},
  MRNUMBER = {3567881},
MRREVIEWER = {Zeinab\ Akhlaghi},
       DOI = {10.1515/jgth-2016-0002},
       URL = {https://doi.org/10.1515/jgth-2016-0002},
}

\end{document}